\newtheorem{theorem}{Theorem}
\newtheorem{lemma}[theorem]{Lemma}
\newtheorem{corollary}[theorem]{Corollary}
\newtheorem{observation}[theorem]{Observation}
\newtheorem{definition}[theorem]{Definition}
\newcommand{\binnr}{\binom{[n]}{r}}
\newcommand{\binnd}{\binom{[n]}{d}}
\newcommand{\R}{\mathbb{R}}
\newcommand{\Z}{\mathbb{Z}}
\newcommand{\C}{\mathcal{C}}
\newcommand{\D}{\mathcal{D}}
\newcommand{\F}{\mathcal{F}}
\newcommand{\calS}{\mathcal{S}}
\newcommand{\tn}{\tilde{n}}
\newcommand{\tr}{\tilde{r}}
\definecolor{defblue}{rgb}{0.1,0.1,0.7}
\newcommand{\defi}[1]{\textcolor{defblue}{\emph{#1}}}
\title{Signotopes with few plus signs}
\author[H.~Bergold]{Helena Bergold}
\address[H.~Bergold]{Institut für Informatik, Freie Universit\"at Berlin, Germany}
\email{}
\author[L.~Egeling]{Lukas Egeling}
\address[L.~Egeling]{Department of Computer Science, ETH Zurich, Switzerland}
\email{}
\author[H.~P.~Hoang]{Hung. P. Hoang}
\address[H.~P.~Hoang]{Algorithm and Complexity Group, Faculty of Informatics, TU Wien, Austria}
\email{phoang@ac.tuwien.ac.at}
\thanks{Hung P. Hoang acknowledges support from the Austrian Science Foundation (FWF, project Y1329 START-Programm).
We would like to thank Bernd G\"{a}rtner for stimulating discussions on the paper.}
\begin{document}

\maketitle

\begin{abstract}
    Arrangements of pseudohyperplanes are widely studied in computational geometry. 
    A rich subclass of pseudohyerplane arrangements, which has gained more attention in recent years, is the so-called signotopes. 
    Introduced by Manin and Schechtman (1989), the higher Bruhat order is a natural order of $r$-signotopes on $n$ elements, with the signotope corresponding to the cyclic arrangement as the minimal element.  
    In this paper, we show that the lower (and by symmetry upper) levels of this higher Bruhat order contain the same number of elements for a fixed difference $n-r$.
    This result implies that given the difference $d=n-r$ and $p$, the number of one-element extensions of the cyclic arrangement of $n$ hyperplanes in $\R^d$ with at most $p$ points on one side of the extending pseudohyperplane does not depend on $n$, as long as $n \geq d + p$. 
\end{abstract}

\section{Introduction}

Arrangements of geometrical and topological objects play an important role in computational geometry. 
A classical example is line arrangements, which consist of a family of pairwise intersecting lines. 
For various occasions, it is useful to study the combinatorial structure instead of the actual geometry. 
For this reason, pseudoline arrangements have gotten more and more attention after Levi's introductory paper~\cite{Levi1926}. 
In contrast to the geometric setting, it is possible to encode the structure of pseudolines efficiently.
To determine the size of an optimal encoding for point sets, Knuth~\cite{Knuth1992} studied the number of the so-called CC-Systems. 
By duality, there are the same number of pseudoline arrangements, which is $2^{\Theta(n^2)}$~\cite{Knuth1992,FelsnerGoodman2017}.
However, the precise leading constant in the exponent is still unknown, which leaves Knuth's question about the optimal encoding size open~\cite{Knuth1992}. 
Recently, Cort\'es Kühnast, Dallant, Felsner, and Scheucher~\cite{KuhnastDFS24} determined the best-known lower bound of $2^{0.2721n^2} $ pseudoline arrangements.
To tackle this challenging task, they used 3-signotopes for an encoding. Since the order on the elements itself is fixed, the set of 3-signotopes has less symmetric copies, which makes the encoding easier. 
As shown by Felsner and Weil~\cite{FelsnerWeil2001} there is a bijection between pseudoline arrangements and 3-signotopes. 

In this paper, we consider higher dimensions. 
Pseudohyperplane arrangements in $\R^d$ are combinatorially encoded as uniform acyclic oriented matroids of rank $r= d+1$. 
For more about oriented matroids, see~\cite{BjoernerLVWSZ1993}.
The asymptotic number of oriented matroids of rank $r$ is $2^{\Theta(n^{r-1})}$. 
However, little is known about the constant in the exponent. 
Instead of the general case, we consider signotopes of rank $r$, a rich subclass of pseudohyperplane arrangements, of which there are also $2^{\Theta(n^{r-1})}$, see~\cite{Balko19, BergoldFS23}.  
The leading constant is also unknown.
In this paper, we determine the number of signotopes with few plus (or symmetrically, few minus) signs.
This may be a step towards determining the precise number of all signotopes in the future. 

To formally define signotopes, let $[a,b]$ denote the set $\{x \in \Z \mid x \geq a \land x \leq b\}$ for two integers $a$ and $b$.
Moreover, let $[n]$ be $[1,n]$ and ${[a,b] \choose r}$ denote the set of all subsets of size $r$ of $[a,b]$.
We call such a subset an \defi{$r$-subset} of $[a,b]$.

\begin{definition}[Signotope]
    For $r \geq 1$, a \defi{signotope of rank $r$} (or an \defi{$r$-signotope}) on $n$ elements is a sign function $\sigma: \binnr \to \{+,-\}$, such that for every $(r+1)$-subset $X = \{x_1, \dots, x_{r+1}\}$ of $[n]$ with $x_1 < \dots < x_{r+1}$, there is at most one sign change in the sequence 
    \[
        \Big(\sigma(X\setminus\{x_1\}), \dots, \sigma(X\setminus\{x_{r+1}\})\Big).
    \]
    The set of all $r$-signotopes on $n$ elements is denoted by $\calS(n,r)$.
\end{definition}

For fixed $n$ and $r$, we can define an order on the set $\calS(n,r)$ of all $r$-signotopes on $n$ elements based on the number of $+$-signs. 
Specifically, two sign functions $\sigma$ and $\tilde{\sigma}$ differ in a \defi{single step}, if $\sigma^{-1}(+) \subset \tilde{\sigma}^{-1}(+)$ and $|\tilde{\sigma}^{-1}(+)| = |\sigma^{-1}(+)| + 1$.
The transitive closure of the single step relation gives the higher Bruhat order. 
\begin{definition}[Higher Bruhat Order]
    For $r \geq 1$, the \defi{higher Bruhat order} $B(n,r)$ is the partial order $\preceq$ on the elements of $\calS(n,r)$ with the transitive closure of the single step.
\end{definition}
Signotopes are the elements of the higher Bruhat order, which were introduced by Manin and Schechtman~\cite{manin1989arrangements} as a higher dimensional equivalence of the weak Bruhat order for permutations. 
They were further studied by Kapranov and Voevodsky~\cite{KaVo91}, Ziegler~\cite{Ziegler1993}, and later Felsner and Weil~\cite{FelsnerWeil2000}. 
In these works they give several geometric interpretations. 
The most important one for our purposes is the representation of an $r$-signotope on $[n]$ as a one-element extension of the cyclic arrangement of $n$ hyperplanes in $\R^{n-r}$~\cite{Ziegler1993}. 

\begin{definition}[Cyclic arrangement]
    The \defi{cyclic arrangement}~$\mathbf{X}^{n,d}_c$ is the set $\{H_1, \dots, H_n \}$ in $\R^d$ given by
    \[
        H_i = \{ (x_1, \dots, x_d) \in \R^d: x_1 + t_i x_2 + \dots + t_i^{d-1} x_d + t_i^d = 0 \}
    \]
    for $i \in [d]$, with arbitrary real parameters $t_1 < \dots < t_n$.
\end{definition}

A \defi{point} of the cyclic arrangement~$\mathbf{X}^{n,n-r}_c$ is the intersection of $n-r$ hyperplanes and can be identified with the $n-(n-r) = r$ hyperplanes which do not contain this point. 
In~$\mathbf{X}^{n,n-r}_c$, along each \defi{line} (i.e., the intersection of $n-r-1$ hyperplanes), the corresponding $r$-subsets of the crossing points are ordered lexicographically. 
A \defi{one-element extension} of $\mathbf{X}^{n,n-r}_c$ is obtained by adding a pseudohyperplane to $\mathbf{X}^{n,n-r}_c$ in general position (i.e., the pseudohyperplane does not contain any point of $\mathbf{X}^{n,n-r}_c$). 
Additionally, the \defi{extending pseudohyperplane} (i.e., the new pseudophyperplane) is oriented; that is, it divides $\R^d$ into two sides, a positive and a negative side. 
If a point lies in the positive side, the corresponding $r$-subset is mapped to $+$, and that correpsonding to a point in the negative side is mapped to $-$. 
Monotonicity of this sign mapping on $r$-subsets follows from the lexicographical order of the $r$-subsets along the lines. 
For an illustration see \cref{fig:1elementextension}.

\begin{figure}[htb]
    \centering
    \includegraphics[width=1\linewidth]{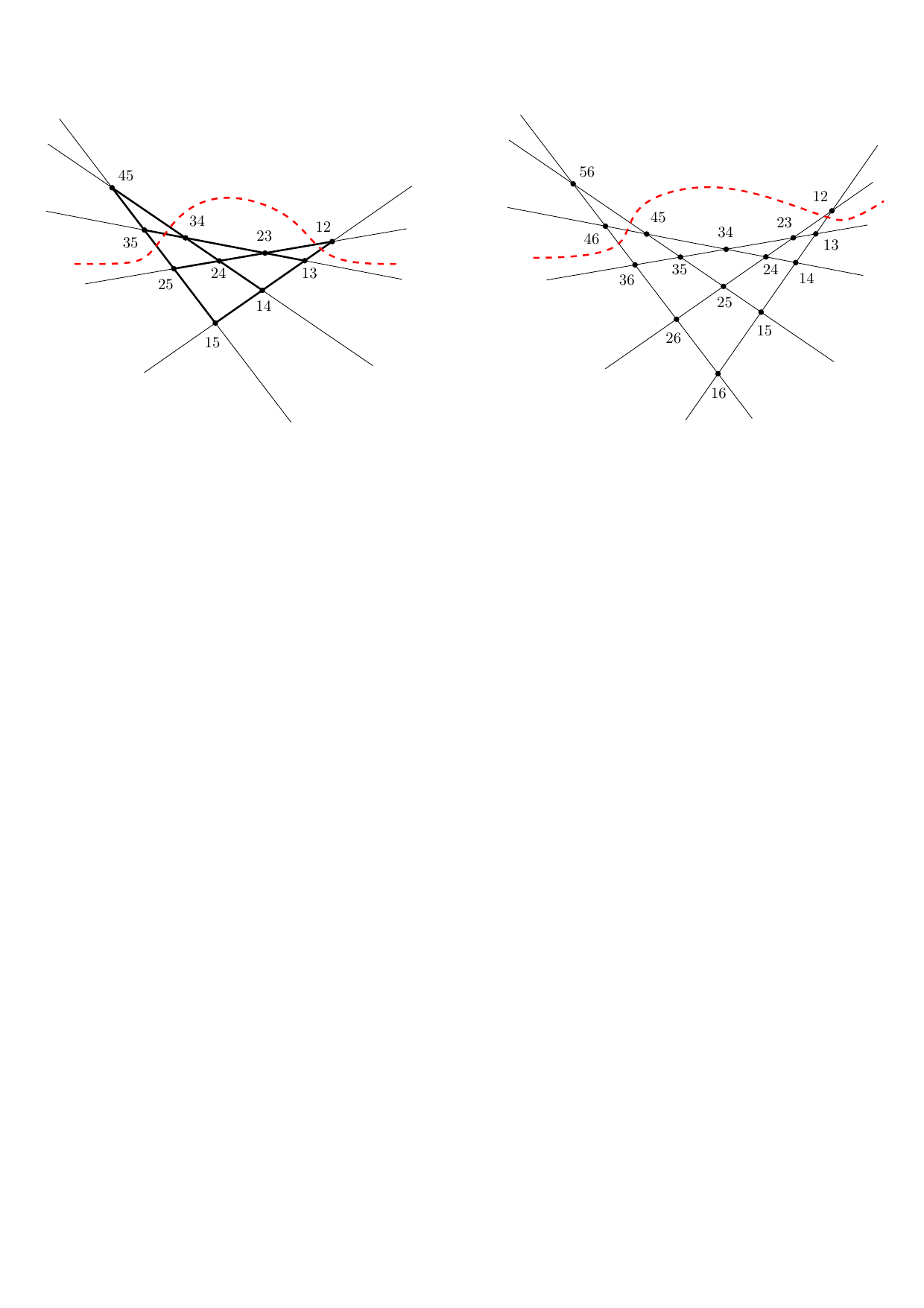}
    \caption{Two examples of one-element extensions of the cyclic arrangements~$\mathbf{X}^{5,2}_c$ (left) and~$\mathbf{X}^{6,2}_c$ (right). The red dashed lines are the extending pseudolines. To avoid clusters, we label each point of the arrangement by the lines that contain it (e.g., point 12 is the intersection of lines $H_1$ and $H_2$). The positive sides are the sides above the extending pseudolines. The corresponding two signotopes are mapped to each other in our bijection for \cref{thm:bijection}.
    }
    \label{fig:1elementextension}
\end{figure}

In this paper, we count the number of signtopes in a given level of the higher Bruhat order (i.e., the number of signotopes with a given number of $+$-signs).
We show that the bottom (and by symmetry the top) levels in the Bruhat order are the same whenever $n-r$ is the same. 
More precisely, we denote by $\calS_{\leq p}(n,r)$ the set of all $r$-signotopes~$\sigma$ on $n$ elements with at most $p$ $+$-signs (i.e., $|\sigma^{-1}(+)| \leq p$).
Our main result is then stated as follows.
\begin{restatable}{thm}{bijection}
\label{thm:bijection}
    Let $n, \tilde{n}, r, \tilde{r},d, p$ be natural numbers such that $n-r = \tilde{n}-\tilde{r} = d$ and $p \leq \min\{r,\tr\}$.
    Then there exists a bijection between $\calS_{\leq p}(n,r)$ and $\calS_{\leq p}(\tilde{n},\tilde{r})$ that preserves the relation~$\preceq$.
\end{restatable}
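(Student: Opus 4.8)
The plan is to reduce to a single ``unit step'' and build an explicit geometric bijection there. Since $r=n-d$ and $\tr=\tn-d$ with $p\le\min\{r,\tr\}$, both $n$ and $\tn$ are at least $d+p$, and one can pass from $(n,r)$ to $(\tn,\tr)$ by repeatedly changing the pair $(n,r)$ by $(\pm1,\pm1)$ while keeping $d$ fixed. So it suffices to construct, for every $n\ge d+p$, an order-isomorphism $\Phi\colon\calS_{\le p}(n,n-d)\to\calS_{\le p}(n+1,n+1-d)$ and then compose. Because $\preceq$ restricted to the bottom $p+1$ levels is generated by the single-step cover relations, it is enough that $\Phi$ be a level-preserving bijection sending single steps to single steps in both directions.

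The map itself is ``insert one more cyclic hyperplane.'' Reading $\sigma$ as a one-element extension of $\mathbf{X}^{n,d}_c$, I add a new hyperplane $H_{\mathrm{new}}$ to obtain $\mathbf{X}^{n+1,d}_c$ and keep the same extending pseudohyperplane; the old intersection points retain their signs, while the new points (those on $H_{\mathrm{new}}$) are all placed on the negative side, so the number of $+$-signs is unchanged. This is cleanest in the complement picture: writing $\tau(S)=\sigma([n]\setminus S)$ on $d$-subsets, the signotope axiom becomes the augmentation condition ``for every $(d-1)$-set $T$ the signs $\tau(T\cup\{z\})$, read in increasing order of $z$, change at most once,'' so for each $T$ the positive $z$'s form a prefix or a suffix of the available values. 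Inserting $H_{\mathrm{new}}$ at a gap $v\in\{0,\dots,n\}$ amounts to relabeling $[n]\hookrightarrow[n+1]$, keeping $\tau$ on the old $d$-subsets and declaring every new $d$-subset (one containing the inserted element) negative. I would show this yields a signotope exactly when $v$ lies on the $-$-side of every block: for each $T$ whose positive $z$'s form a nonempty proper prefix, $v$ must sit above that prefix, and for each $T$ with a suffix block, below it. The inverse map contracts the inserted element, i.e.\ deletes it on the $d$-subset side; deletion always preserves the axiom (each augmentation sequence loses one entry, so a subsequence with at most one change remains), and it preserves the number of $+$-signs exactly when the deleted element lies in no positive $d$-subset.

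The technical heart, and the step I expect to be hardest, is a structural lemma guaranteeing that these operations are available once $n\ge d+p$. Concretely I would prove: (i) if two positive $d$-subsets differ in a single element $a<b$, then by the augmentation condition every $z\in(a,b)$ outside the common part $T$ also gives a positive set, so such a ``span'' contains at most $p$ positive values and is therefore short; this forces the positive $d$-subsets to cluster near the two ends of $[n]$, and when $n\ge d+p$ it leaves a feasible gap $v$ meeting all side-constraints (and when $n\ge d+p+1$ it even leaves an element lying in no positive $d$-subset). (ii) An \emph{independence} statement: inserting at any feasible gap produces the same $(r+1)$-signotope, and deleting any element lying in no positive $d$-subset produces the same $r$-signotope. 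Part (ii) makes $\Phi$ and the deletion map well defined and mutually inverse, hence $\Phi$ a bijection, and it yields level-preservation since the inserted points are all negative.

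Finally, order-preservation follows by coordinating the insertion along a cover. If $\sigma\lessdot\tau$ is a single step adding one positive $r$-subset, then on the complement side $\tau$ has exactly one more positive $d$-subset, so its set of feasible gaps is contained in that of $\sigma$ but, by the lemma, still nonempty; choosing a common feasible gap $v$ for both (legitimate by independence) makes $\Phi(\sigma)$ and $\Phi(\tau)$ agree except at the single flipped $d$-subset, so $\Phi(\sigma)\lessdot\Phi(\tau)$. The same argument for the deletion map gives the reverse implication, so $\Phi$ is an order-isomorphism onto its image, and iterating it (and its inverse) yields the desired relation-preserving bijection between $\calS_{\le p}(n,r)$ and $\calS_{\le p}(\tn,\tr)$. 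The crux throughout is the confinement-to-the-ends lemma, as it is what converts the hypothesis $n\ge d+p$ into the existence of a feasible, essentially canonical place to add or remove a hyperplane.
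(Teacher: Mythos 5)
Your reduction to unit steps and your passage to the complement ($d$-subset) picture both match the paper's setup, but the core of your construction --- that once $n \ge d+p$ there is a \emph{single} feasible gap $v$ at which one can insert a hyperplane with all new points negative (and, inversely, an element lying in no positive $d$-subset that can be deleted) --- is false; the paper explicitly flags this failure mode in \cref{sec:conclusions}. The obstruction is that the positive $d$-subsets do cluster, but they cluster around one of $d+1$ different source subsets $S_i=(1,\dots,i,\,n-d+i+1,\dots,n)$, one per connected $+$-component, and distinct components impose incompatible side-constraints on $v$. Concretely, take $n=10$, $d=4$, $p=6=n-d$ and the co-signotope whose positive $4$-subsets are $\{5,8,9,10\}$, $\{6,8,9,10\}$, $\{7,8,9,10\}$ (a component at $S_0$) together with $\{1,2,3,10\}$, $\{1,2,4,10\}$, $\{1,2,5,10\}$ (a component at $S_3$); this is a valid co-signotope by \cref{lem:decomp_reverse} applied to the sparse composition $(3,0,0,3,0)$. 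For $T=\{8,9,10\}$ the positive completions $z\in\{5,6,7\}$ form a suffix block, forcing $v\le 4$ in your scheme, while for $T=\{1,2,10\}$ the positive completions $z\in\{3,4,5\}$ form a prefix block, forcing $v\ge 5$: no feasible gap exists, and one checks directly that inserting at any $v$ and negating all new points violates the single-sign-change axiom. The inverse direction fails as well: in the correct image with $\tn=11$ every element of $[11]$ lies in some positive $4$-subset, so there is no hyperplane to delete.

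The repair, which is what the paper does, is to abandon a global insertion point and shift each $+$-component separately: one first proves that each component contains exactly one source $S_i$ and that its subsets satisfy the two-sided clustering bounds of \cref{lem:one_source_index} (this is essentially your observation (i), and it is the right first step), then characterizes each component by a generalized Ferrers diagram in coordinates centered at $S_i$ --- a description depending only on $d$, $i$, $p$ and not on $n$ --- and finally replaces $b_j$ by $b_j+\tn-n$ exactly for those coordinates $j$ whose series is right-aligned, i.e.\ with a different virtual insertion point for each component. The price of this per-component shift is an additional argument (\cref{lem:decomp} and \cref{lem:decomp_reverse}) that the decomposition into components is well behaved and that recombining the shifted components again yields a co-signotope; without that decomposition your single-insertion map is not well defined, so the proposal has a genuine gap.
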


In other words the theorem states that the lower levels of the higher Bruhat order contain the same number of elements for a fixed difference $n-r$.
Note that by symmetry the same holds for the upper levels, i.e., if there are $\leq r$ minus signs. 
Using the representation of signotopes as one-element extensions of the cyclic arrangement, the geometric interpretation of the theorem above is as follows.

\begin{corollary}
    Given $d,p \geq 0$, for any $n \geq d+p$, the number of one-element extensions of the cyclic arrangement~$\mathbf{X}^{n,d}_c$ with at most $p$ points on one side of the extending pseudohyperplane is a constant (which depends only on $d$ and $p$).
\end{corollary}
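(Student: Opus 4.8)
The plan is to deduce the corollary directly from \cref{thm:bijection} through the correspondence between signotopes and one-element extensions of the cyclic arrangement recorded above, so that the work is essentially a translation of terms. First I would make the dictionary precise. By the representation of an $r$-signotope on $[n]$ as a one-element extension of the cyclic arrangement in $\R^{n-r}$ described above, an $(n-d)$-signotope $\sigma$ on $[n]$ encodes an oriented one-element extension of $\mathbf{X}^{n,d}_c$: each point of $\mathbf{X}^{n,d}_c$ is identified with the $r=n-d$ hyperplanes missing it, hence with an $r$-subset of $[n]$, and it receives the sign $+$ exactly when it lies on the positive side of the extending pseudohyperplane. This assignment is a bijection between the set of such oriented extensions and $\calS(n,n-d)$, and under it the number of points on the positive side equals $|\sigma^{-1}(+)|$.

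Second I would restrict this bijection to the low levels. Taking the positive side to be the distinguished one, the extensions with at most $p$ points on that side correspond precisely to the signotopes with $|\sigma^{-1}(+)| \le p$, that is, to the set $\calS_{\le p}(n,n-d)$. Hence the quantity to be counted is exactly $|\calS_{\le p}(n,n-d)|$, and it remains only to show that this number is independent of $n$ for $n \ge d+p$.

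Third I would invoke \cref{thm:bijection}. For any $n,\tilde n \ge d+p$, set $r = n-d$ and $\tilde r = \tilde n - d$; then $n-r = \tilde n - \tilde r = d$, and $p \le \min\{r,\tilde r\}$ holds because it is equivalent to $n,\tilde n \ge d+p$. Thus the hypotheses of the theorem are met, and it provides a (here we only need the cardinality part of the) bijection between $\calS_{\le p}(n,n-d)$ and $\calS_{\le p}(\tilde n, \tilde n - d)$, so these sets have equal size. Consequently $|\calS_{\le p}(n,n-d)|$ does not vary with $n$ once $n \ge d+p$, giving a constant that depends only on $d$ and $p$, as claimed.

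I expect no genuine obstacle beyond careful bookkeeping; the only point needing attention is the meaning of ``one side.'' The statement refers to the distinguished (positive) side of the oriented extension, which is what makes the count equal to $|\calS_{\le p}|$. The unoriented reading, that \emph{some} side carries at most $p$ points, follows in the same fashion using the $+/-$ symmetry of signotopes noted after \cref{thm:bijection}, since reversing the orientation of the extending pseudohyperplane swaps the two sides and corresponds to flipping all signs of the associated signotope.
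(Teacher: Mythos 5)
Your proposal is correct and matches the paper's (implicit) argument: the corollary is stated as an immediate consequence of \cref{thm:bijection} via the representation of $r$-signotopes as one-element extensions of $\mathbf{X}^{n,n-r}_c$, with the positive side of the extending pseudohyperplane corresponding to the $+$-subsets, and the parameter translation $r=n-d$, $p\le\min\{r,\tilde r\}\Leftrightarrow n,\tilde n\ge d+p$ is exactly the bookkeeping you carry out. Your final remark on the unoriented reading via the $+/-$ symmetry is also consistent with the paper's note that the upper levels behave symmetrically.
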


The representation also allows us to prove the theorem.
It turns out that whenever there are $\leq r$ plus signs (respectively minus signs), we only need to consider the local structure of the one-element extension. 
In particular, this local structure only depends on $p$ and $n-r$, and not the exact values of $n$ and $r$ themselves; this insight gives rise to the bijection in \cref{thm:bijection}.
In \cref{sec:proof_outline} we give a more detailed outline and in Sections~\ref{sec:plus_subsets}--\ref{sec:main_proof} we give the technical proof.
In order to show this dependency, we use a generalization of the Ferrers diagrams.
These diagrams are geometric illustrations of integer partitions~\cite{MR2122332}, which are classical objects in number theory and combinatorics.
This connection allows us not only to prove the bijection but also to quantify the size of $\calS_{\leq p}(n,r)$ in \cref{sec:count}.

\section{Proof overview for Theorem \ref{thm:bijection}}
\label{sec:proof_outline}
In this paper, since we discuss $r$-signotopes on $[n]$ where $n-r$ is fixed, it is more convenient to work with the complements of the $r$-subsets.
Hence in \cref{subsec:co}, we introduce the notion of co-signotopes with related definitions analogous to the signotopes and rephrase \cref{thm:bijection} in terms of this new notion.
Afterwards, we provide an overview of the proof.

\subsection{Co-signotopes}
\label{subsec:co}
For convenience, we write each subset as a sequence of its elements in the increasing order; that is we consider an $r$-subset $\{a_1, \dots, a_r\}$ with $a_1 < \dots < a_r$ as the tuple $(a_1, \dots, a_r)$.
This way, we can order the subsets in the lexicographical order of the tuple representations.

\begin{itemize}
    \item For a function $f$, a $d$-subset $B = (b_1, \dots, b_d)$ of $[n]$, and an index $i \in [d]$, let $B_i := B \setminus \{b_i\}$.
    Then the \defi{$(f,B,i)$-series} is the sequence of $f(B_i \cup \{x\})$ for $x \in [n] \setminus B_i$ in the increasing order.
    In other words, if $[n] \setminus B_i = \{x_1, \dots, x_{n-d+1}\}$ with $x_1 < \dots < x_{n-d+1}$, then the $(f,B,i)$-series is exactly the sequence
    \[
        (f(B_i\cup\{x_1\}), \dots, f(B_i\cup\{x_{n-d+1}\})).
    \]
    When $f$ is the identity function, then this corresponds to the sequence $(B_i\cup\{x_1\}, \dots, B_i\cup\{x_{n-d+1}\})$, and we call it simply the $(B,i)$-series.
    Note that the $(B,i)$-series contains $B$.

    \item For $0\leq d < n$, a \defi{co-signotope of rank $d$} (or a \defi{$d$-co-signotope}) on $n$ elements is a sign function $\tau: \binnd \to \{+,-\}$, such that for every $d$-subset $B$ of $[n]$ and index $i \in [d]$, there is at most one sign change in the $(\tau,B,i)$-series.
    \item The set of all $d$-co-signotopes on $n$ elements is denoted by $\bar{\calS}(n,d)$.
    \item Given a sign function $\tau$, a subset~$R$ is a \defi{$+$-subset} of $\tau$ if $\tau(R)=+$, and it is a \defi{$-$-subset} of $\tau$ if $\tau(R)=-$.
    The set of all $d$-co-signotopes on $n$ elements with exactly $p$ $+$-subsets is denoted by $\bar{\calS}_{p}(n,d)$. 
    The set of all $d$-co-signotopes on $n$ elements with at most $p$ $+$-subsets is denoted by $\bar{\calS}_{\leq p}(n,d)$. 
    
    \item The \defi{complementary higher Bruhat order} $\bar{B}(n,d)$ is the partial order $\preceq_{\bar{\calS}}$ on the elements of $\bar{\calS}(n,d)$ with the transitive closure of the single step.
\end{itemize}

The following observation follows immediately from the definitions of signotopes and co-signotopes.

\begin{observation}
    For non-negative integers $n, d, p$ such that $d \leq n$, there is an isomorphism between $S_p(n,n-d)$ and $\bar{S}_p(n,d)$ that preserves the single step relation.
\end{observation}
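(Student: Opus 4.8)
The natural candidate for the isomorphism is \emph{complementation}. For an $r$-subset $R$ of $[n]$ with $r = n-d$, the complement $[n]\setminus R$ is a $d$-subset, and $R \mapsto [n]\setminus R$ is a bijection between $\binnr$ and $\binnd$. I would use it to send each sign function $\sigma$ on $\binnr$ to the sign function $\tau$ on $\binnd$ defined by $\tau(B) := \sigma([n]\setminus B)$. Since complementation is an involution, the assignment $\sigma \mapsto \tau$ is a bijection between sign functions on $\binnr$ and sign functions on $\binnd$, and its effect on the plus-preimages is transparent: $\tau^{-1}(+) = \{[n]\setminus R : R \in \sigma^{-1}(+)\}$, so in particular $|\tau^{-1}(+)| = |\sigma^{-1}(+)|$.

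The heart of the argument is to check that $\sigma$ satisfies the signotope condition if and only if $\tau$ satisfies the co-signotope condition. The key computation is the following. Fix a $d$-subset $B = (b_1, \dots, b_d)$ and an index $i \in [d]$, and set $Y := [n] \setminus B_i$, where $B_i = B \setminus \{b_i\}$. Since $B_i$ has $d-1$ elements, $Y$ is an $(r+1)$-subset of $[n]$; write $Y = \{x_1 < \dots < x_{r+1}\}$. For each $x_j \in Y$ we have $[n] \setminus (B_i \cup \{x_j\}) = Y \setminus \{x_j\}$, so
\[
\tau(B_i \cup \{x_j\}) = \sigma\big([n] \setminus (B_i \cup \{x_j\})\big) = \sigma(Y \setminus \{x_j\}).
\]
As $x$ ranges over $[n]\setminus B_i = Y$ in increasing order, the $(\tau, B, i)$-series is therefore exactly the sequence $(\sigma(Y\setminus\{x_1\}), \dots, \sigma(Y\setminus\{x_{r+1}\}))$ appearing in the signotope definition for the $(r+1)$-subset $Y$.

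Next I would verify that this identification is exhaustive in both directions. The pair $(B, i)$ determines the $(d-1)$-subset $B_i$, and $B_i \mapsto Y = [n]\setminus B_i$ is a bijection between $(d-1)$-subsets and $(r+1)$-subsets of $[n]$; conversely every $(r+1)$-subset $Y$ arises as $[n]\setminus B_i$ for a suitable pair (taking any $b_i \in Y$). Hence ``at most one sign change in every $(\tau, B, i)$-series'' is equivalent to ``at most one sign change in the signotope sequence of every $(r+1)$-subset $Y$'', which is precisely the signotope condition. This shows $\sigma \in \calS(n,r)$ if and only if $\tau \in \bar{\calS}(n,d)$, so $\sigma \mapsto \tau$ restricts to a bijection $\calS(n,r) \to \bar{\calS}(n,d)$, and by the plus-count identity above it further restricts to a bijection between $S_p(n,n-d)$ and $\bar{S}_p(n,d)$.

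Finally, I would confirm that the single step relation is preserved, which is immediate from $\tau^{-1}(+) = \{[n]\setminus R : R \in \sigma^{-1}(+)\}$: if $\sigma$ and $\sigma'$ differ in a single step, i.e.\ $\sigma^{-1}(+) \subset \sigma'^{-1}(+)$ with $|\sigma'^{-1}(+)| = |\sigma^{-1}(+)| + 1$, then taking complements preserves both the strict inclusion and the cardinality gap, so the corresponding $\tau, \tau'$ differ in a single step as well, and conversely. I do not expect any genuine obstacle here; the only point requiring care is the bookkeeping in the second and third paragraphs, namely the verification that as $(B,i)$ ranges over all valid pairs the associated $(r+1)$-subsets $Y$ sweep out all of $\binom{[n]}{r+1}$, so that the two ``at most one sign change'' conditions match exactly.
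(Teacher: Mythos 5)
Your proof is correct and matches the paper's intent exactly: the paper states this observation without proof as ``immediate from the definitions,'' and the intended isomorphism is precisely the complementation map $\tau(B) := \sigma([n]\setminus B)$ that you spell out, with the $(\tau,B,i)$-series corresponding bijectively to the signotope sequences of $(r+1)$-subsets. Your write-up simply makes the bookkeeping explicit; there is no gap and no divergence from the paper's approach.
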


Then we can rephrase \cref{thm:bijection} in terms of co-signotopes as follows.

\begingroup
\def\thetheorem{\ref{thm:bijection}}
\begin{theorem}[Rephrased]
    Let $n, \tilde{n}, d, p$ be integers such that $0\leq p \leq \min\{n,\tn\}-d$.
    Then there exists a bijection between $\bar{\calS}_{\leq p}(n,d)$ and $\bar{\calS}_{\leq p}(\tn,d)$ that preserves the relation~$\preceq_{\bar{S}}$.
\end{theorem}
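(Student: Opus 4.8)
The plan is to reduce the statement to a bijection between consecutive levels and then build that bijection by a single local insertion. Concretely, since the hypothesis $p \le \min\{n,\tn\}-d$ means both $n \ge d+p$ and $\tn \ge d+p$, it suffices to produce, for every $n \ge d+p$, an order-preserving bijection $\Phi_n\colon \bar{\calS}_{\le p}(n,d) \to \bar{\calS}_{\le p}(n+1,d)$; composing the $\Phi_n$ (and their inverses) then connects any two admissible values $n$ and $\tn$. The map $\Phi_n$ will insert one new element into $[n]$, relabel, and declare every newly created $d$-subset to be a $-$-subset. The whole difficulty is to locate the insertion point canonically and to prove that the result again lies in $\bar{\calS}_{\le p}(n+1,d)$ and respects $\preceq$.

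The first and decisive step is a structural analysis of the $+$-subsets, carried out in \cref{sec:plus_subsets}. The at-most-one-sign-change condition, read along each $(\tau,B,i)$-series, forces the set $P := \tau^{-1}(+)$ to be \emph{cornered}: along every series the $+$-signs occupy either a prefix or a suffix, so each element of $[n]$ is naturally typed as bottom-anchored, top-anchored, or free, and every $+$-subset splits into a bottom part of small elements and a top part of large elements separated by a gap. I would package this data as a generalized Ferrers diagram that records, for each $+$-subset, the offsets of its coordinates from the two extremes $1$ and $n$; the key point is that this diagram has at most $p$ rows and depends only on $p$ and $d$, not on $n$. A counting argument using $n \ge d+p$ then bounds the bottom-anchored and top-anchored windows and produces a canonical cut $c = c(\tau)$, say the largest bottom-anchored element, past which the structure is entirely top-anchored; this is where the new element will be inserted.

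With the cut in hand I would define $\Phi_n(\tau)$ by inserting a new element immediately after $c$, shifting every label larger than $c$ up by one, and extending $\tau$ by $-$ on all $d$-subsets meeting the new element. The co-signotope axioms for $\Phi_n(\tau)$ reduce to checking that in every series the single inserted $-$-sign falls outside the block of $+$-signs; this is exactly the defining property of the cut $c$, so no second sign change is created. One then verifies that $|\Phi_n(\tau)^{-1}(+)| = |P| \le p$ and that $\Phi_n$ is invertible, the inverse being deletion of the canonical free element at the cut. Order-preservation follows because $\Phi_n$ induces a sign-preserving bijection of the $+$-subsets: a single step that turns a $-$-subset into a $+$-subset corresponds under $\Phi_n$ to flipping the image subset, so $\sigma \preceq \tau$ holds iff $\Phi_n(\sigma) \preceq \Phi_n(\tau)$.

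The main obstacle is the structural step. I expect the real work to be proving that the bottom/top typing is globally consistent and that $c(\tau)$ is well defined and behaves coherently under single steps, i.e.\ that inserting at $c(\tau)$ commutes with flipping a sign; otherwise the relabelings used for $\Phi_n(\sigma)$ and $\Phi_n(\tau)$ would disagree and order-preservation would break. The Ferrers-diagram encoding is what I would lean on to make the cut canonical and $n$-independent and to check reversibility. The delicate quantitative point is to match the window bounds exactly to the threshold $n \ge d+p$, so that a genuine gap is always available after insertion even in the extremal case $n = d+p$, where the bottom and top windows can already tile $[n]$ with no gap of their own.
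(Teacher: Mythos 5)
Your structural picture --- a single ``cornered'' set of $+$-subsets with one global cut $c(\tau)$ separating a bottom-anchored window from a top-anchored window --- is where the argument breaks. The $+$-subsets of $\tau$ do not in general form one such block: they decompose into up to $d+1$ connected components, each anchored at one of the source subsets $S_i=(1,\dots,i,\,n-d+i+1,\dots,n)$, and the component anchored at $S_i$ splits its subsets between coordinates $i$ and $i+1$. Different components use \emph{different} split positions, and when $n=d+p$ their windows overlap, so an element of $[n]$ can be bottom-anchored in one $+$-subset and top-anchored in another; no single insertion point exists. Concretely, take $d=3$, $n=5$, $p=2$ and $\tau$ with $\tau^{-1}(+)=\{(1,2,3),(3,4,5)\}$; this is a valid co-signotope with two components anchored at $S_3$ and $S_0$. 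The correct image in $\bar{\calS}_{\le 2}(6,3)$ keeps $(1,2,3)$ and sends $(3,4,5)$ to $(4,5,6)$, so the element $3$ must simultaneously stay at $3$ and move to $4$. Inserting a new element after $3$ yields $\{(1,2,3),(3,5,6)\}$, whose series through $(1,5,6),(2,5,6),(3,5,6),(4,5,6)$ reads $(-,-,+,-)$ and has two sign changes; inserting after $2$ yields $\{(1,2,4),(4,5,6)\}$, whose series through $(1,2,3),(1,2,4),(1,2,5),(1,2,6)$ reads $(-,+,-,-)$ and also fails. Since any ground-set insertion relabels each element consistently across all subsets containing it, no choice of cut can realize the required map here.

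The fix is exactly to make the shift coordinate-wise and subset-dependent rather than a relabeling of $[n]$: replace $b_j$ by $b_j+\tn-n$ precisely when the $(\tau,B,j)$-series is right-aligned (equivalently, shift only the top coordinates $j>i$ of the subsets in the component anchored at $S_i$). To justify this you need the component decomposition you are missing: each component contains exactly one source subset (a distance argument in the arrangement graph using $p\le n-d$), each component in isolation is a $(d,i)$-Ferrers diagram independent of $n$ --- this part of your intuition is right --- and the components can be recombined freely subject to a sparseness condition on which sources are occupied. Your reduction to consecutive values of $n$ and the order-preservation argument are fine once the map itself is repaired, but as written the construction of $\Phi_n$ is not well defined and does not land in $\bar{\calS}_{\le p}(n+1,d)$.
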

\addtocounter{theorem}{-1}
\endgroup

\subsection{Proof strategy of Theorem \ref{thm:bijection}}
\label{subsec:proof_outline}
Let $G_{n,d}$ be the underlying graph of the cyclic arrangement~$\mathbf{X}^{n,d}_c$; that is, its vertices are the points of~$\mathbf{X}^{n,d}_c$, and its edges are the segments of the lines of~$\mathbf{X}^{n,d}_c$ such that the segments have two points of~$\mathbf{X}^{n,d}_c$ as endpoints and contain no other points in the interior.
In other words, the vertices of $G_{n,d}$ are the $d$-subsets, and two $d$-subsets~$B$ and $B'$ are connected by an edge, if they are adjacent in the $(B,i)$-series for some $i \in [d]$.
For illustration, the bold line segments in \cref{fig:1elementextension}(left) form the edges of the graph $G_{n,d}$.

The proof of \cref{thm:bijection} hinges on the following main idea.
For a co-signotope~$\tau \in \bar{\calS}_{\leq p}(n,d)$, we consider the induced subgraph of $G_{n,d}$ on the $+$-subsets of $\tau$.
Then each connected component in this induced subgraph (which we also call the $+$-component of $\tau$) can be characterized by a structure around a "source" subset (defined in the next section).
In particular, locally, the $+$-component appears like a generalized version of the Ferrers diagrams.
Further, this characterization is independent from that of other $+$-components and depends only on the source subset, $d$, and $p$.
With this observation, we can independently map each $+$-component of $\tau$ to that of a co-signotope in $\bar{\calS}_{\leq p}(\tn,d)$.

In order to rigorously execute the idea above, we first provide some preliminary structural insights on the $+$-subsets of a co-signotope $\tau \in \bar{\calS}_{\leq p}(n,d)$ in \cref{sec:plus_subsets}.
Next, in \cref{sec:onepluscomponent}, we show how we can map one $+$-component of a co-signotope $\tau \in \bar{\calS}_{\leq p}(n,d)$ to that of a co-signotope $\tilde{\tau} \in \bar{\calS}_{\leq p}(\tn,d)$.
To ease the explanation, in this section, we assume that there is exactly one $+$-component for $\tau$ and for $\tilde{\tau}$.
Here is also where we explain the connection with the Ferrers diagrams.
Then in \cref{sec:decompositionPlusComp}, we show how we can map one signotope in $\bar{\calS}_{\leq p}(n,d)$ to a signotope in $\bar{\calS}_{\leq p}(\tn,d)$ based on their sets of $+$-components. 
Finally, \cref{sec:main_proof} wraps up the proof and provides a simple bijection as required by \cref{thm:bijection}.

\section{Structure of $+$-subsets of a co-signotope in $\bar{\calS}_p(n,d)$}
\label{sec:plus_subsets}
Consider a co-signotope in $\bar{\calS}_p(n,d)$ that has only one $+$-subset.
This unique $+$-subset needs to be a subset which appears at the start or end of every sequence it is contained in. 
The only $d$-subsets which fulfill this requirement are $S_{n,d,i}$ where $S_{n,d,i} = (1, \ldots, i, n-d+i+1, \ldots, n)$.
We call such an $S_{n,d,i}$ a \defi{source subset}.
For convenience, we drop the subscripts $n$ and $d$, when they are clear.

Recall that $G_{n,d}$ is the underlying graph of the cyclic arrangement~$\mathbf{X}^{n,d}_c$, as defined in \cref{subsec:proof_outline}.
We have the following lemma on the distance from a subset to a source subset.

\begin{lemma}
\label{lem:dist_to_source}
	Let $n,d,i$ be integers such that $1 \leq i \leq d \leq n$.
	Let $B = (b_1, \dots, b_d)$ be a $d$-subset.
	Then every path between $B$ and $S_{n,d,i}$ in $G_{n,d}$ has length at least $b_i-b_{i+1}+n-d+1$, where we define $b_0 := 0$ and $b_{d+1} := n+1$.
\end{lemma}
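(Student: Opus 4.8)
The plan is to exhibit a potential function on the vertices of $G_{n,d}$ that changes by at most $1$ along every edge, equals the claimed bound at $B$, and vanishes at the source subset $S_{n,d,i}$. To set this up I would encode each $d$-subset $B=(b_1,\dots,b_d)$ by its \emph{gap vector} $(g_0,\dots,g_d)$, where $g_j := b_{j+1}-b_j-1$ with the conventions $b_0=0$ and $b_{d+1}=n+1$; thus $g_j$ counts the elements of $[n]\setminus B$ lying strictly between $b_j$ and $b_{j+1}$, and $\sum_{j=0}^d g_j = n-d$. A one-line computation gives $b_i - b_{i+1} + n - d + 1 = (n-d) - g_i(B)$, so the target bound is exactly $(n-d)-g_i(B)$. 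Moreover the source subset $S_{n,d,i}=(1,\dots,i,n-d+i+1,\dots,n)$ has all its slack concentrated in coordinate $i$, namely $g_i(S_{n,d,i}) = n-d$ and $g_j(S_{n,d,i})=0$ for $j\ne i$; hence $(n-d)-g_i(S_{n,d,i}) = 0$.

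The crux is to understand how a single edge of $G_{n,d}$ acts on the gap vector. First I would translate the definition of edges via the $(B,i)$-series into the explicit characterisation that $B \sim B'$ if and only if $B \triangle B' = \{a,a'\}$ with $a<a'$ and every integer in the open interval $(a,a')$ lies in $B \cap B'$ (i.e.\ the swapped pair straddles a block of consecutive elements of $B$). I would then show such a move transfers exactly one unit of gap between two coordinates: writing $a=b_j$ and $a'=b_k+1$ (so that $b_j,\dots,b_k$ is the run of consecutive elements from $a$ up to $a'-1$, forcing $g_j=\dots=g_{k-1}=0$ and $g_k\ge 1$), passing from $B$ to $B'$ replaces $(g_{j-1},g_k)$ by $(g_{j-1}+1,g_k-1)$ and leaves every other coordinate unchanged. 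Consequently each edge alters a fixed coordinate $g_i$ by at most $1$ in absolute value.

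Defining $\Phi_i(B') := (n-d) - g_i(B')$, the previous step yields $|\Phi_i(B')-\Phi_i(B'')|\le 1$ for every edge $B'\sim B''$, since $\Phi_i$ differs from $-g_i$ by the constant $n-d$. Therefore along any path from $B$ to $S_{n,d,i}$ the value of $\Phi_i$ decreases by at most $1$ per edge, so the number of edges is at least $\Phi_i(B) - \Phi_i(S_{n,d,i}) = (n-d)-g_i(B) = b_i - b_{i+1} + n - d + 1$, which is the asserted lower bound.

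I expect the main obstacle to be the second paragraph: rigorously deriving the edge characterisation from the $(B,i)$-series and verifying the gap transfer, in particular the case where the swapped pair $\{a,a'\}$ jumps over a whole block of consecutive elements of $B$, so that the two affected coordinates $j-1$ and $k$ need not be adjacent and the internal coordinates must be shown to be zero and hence untouched. Once this combinatorial bookkeeping is in place, the potential argument is immediate and gives the bound for \emph{every} path, not merely a shortest one.
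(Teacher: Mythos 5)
Your argument is correct, and it reaches the bound by a genuinely different mechanism than the paper, although both start from the same model of the edges of $G_{n,d}$ (replacing one element by the nearest free integer, i.e.\ ``moving a token''). Your bookkeeping checks out: $\sum_j g_j = b_{d+1}-b_0-(d+1)=n-d$, the identity $(n-d)-g_i(B)=b_i-b_{i+1}+n-d+1$ holds, $g_i(S_{n,d,i})=n-d$, and a single move with $a=b_j$, $a'=b_k+1$ indeed sends $(g_{j-1},g_k)$ to $(g_{j-1}+1,g_k-1)$ with $j-1<j\le k$, so every fixed coordinate of the gap vector is $1$-Lipschitz along edges; the ``jump over a block'' case you flag is exactly the computation you sketch and poses no real difficulty. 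The paper instead extracts the lower bound by a covering argument: on any walk from $S_{n,d,i}$ to $B$, a token must cross from $[b_i-1]$ to $[b_i,n]$ (by comparing how many elements of $S_{n,d,i}$ and of $B$ lie in $[b_i-1]$), hence every integer of $[i+1,b_i]$ is occupied at some step, symmetrically every integer of $[b_{i+1},n-d+i]$ is, and each step accounts for at most one such integer, giving the sum $(b_i-i)+(n-d+i-b_{i+1}+1)$. Your potential $\Phi_i=(n-d)-g_i$ condenses that set of ``required visits'' into one monotone-by-at-most-one invariant, which is arguably cleaner and more standard for distance lower bounds; the paper's explicit ``every element of $[i+1,b_i]$ appears in some $+$-subset along the path'' formulation has the side benefit that it is reused verbatim inside the proof of \cref{lem:one_source_index}, so if you adopted the potential version there you would need to recover that covering statement separately.
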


\begin{proof}
    We represent each $d$-subset as a set of $d$ tokens, one placed on the real line at each element of the subset.
    Then the act of going from one $d$-subset to an adjacent $d$-subset in $G_{n,d}$ can be seen as moving a token along the real line (in an arbitrary direction) to the next free integer. 
	Suppose that $b_i > i$.
	Note that $[b_i-1]$ contains at least $i$ elements of $S_i$ and exactly $i-1$ elements of $B$.
	Hence, in the process of going along a path from $S_i$ to $B$ in $G_{n,d}$, there must be at least one token from the interval $[b_i-1]$ going to the interval $[b_i,n]$. 
	Therefore, every integer in $[i+1,b_i]$ must be visited by a token at some point in the process (these integers may or may not be visited by the same token).
	Note that this statement is vacuously true when $b_i = i$ or when $i=0$.
	
	Using a symmetric argument, we also deduce that every integer in $[b_{i+1},n-d+i]$ must also be visited by a token at some point in the process above.
	Further, at any step, we only move one token, and hence, at most one integer in $[i+1,b_i] \cup [b_{i+1},n-d+i]$ is visited at any step.
	Hence, we conclude that the path from $S_i$ to $B$ must have length at least $(b_i-i) + (n-d+i-b_{i+1}+1)$.
	The lemma then follows.
\end{proof}

For a $d$-co-signotope $\tau$, let $G_{n,d}[\tau]$ be the induced subgraph of $G_{n,d}$ on $\tau^{-1}(+)$ (i.e., the $+$-subsets of $\tau$).
A maximal connected induced subgraph of $G_{n,d}[\tau]$ is called a \defi{$+$-component} of $\tau$.
Then it is easy to see that each $+$-component of $\tau$ contains at least one source subset.
This is actually tight for all components when $\tau \in \bar{\calS}_p(n,d)$ with $p  \leq n-d$, as stated in the following lemma.

\begin{lemma}
\label{lem:exact_one_source}
    Let $n,d,p$ be non-negative integers such that $p \leq n-d$.
    Then for $\tau \in \bar{\calS}_p(n,d)$, each $+$-component of $\tau$ has exactly one source subset.
\end{lemma}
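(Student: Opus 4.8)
The plan is to combine two facts that are already available: every $+$-component of $\tau$ contains at least one source subset (noted immediately before the statement), and distinct source subsets are far apart in $G_{n,d}$ (which I will extract from \cref{lem:dist_to_source}). Since the existence direction is given, it suffices to prove the uniqueness direction, namely that no $+$-component can contain two distinct source subsets. I would argue this by contradiction.

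Suppose some $+$-component $C$ of $\tau$ contains two distinct source subsets $S_{n,d,i}$ and $S_{n,d,j}$ with $i < j$. On the one hand, $C$ is connected and, since $\tau \in \bar{\calS}_p(n,d)$ has exactly $p$ $+$-subsets in total, $C$ has at most $p$ vertices; hence a shortest path inside $C$ between the two sources is simple, has at most $p$ vertices, and therefore has length at most $p-1$. On the other hand, any path inside the induced subgraph $C$ is also a path in $G_{n,d}$, so the $C$-distance between the two sources is at least their distance in the full graph $G_{n,d}$. Thus it is enough to show that this $G_{n,d}$-distance is at least $n-d$, for then $n-d \le p-1$, contradicting $p \le n-d$.

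To get the distance bound I would apply \cref{lem:dist_to_source} with $B = S_{n,d,i}$ as the arbitrary $d$-subset and $S_{n,d,j}$ as the target source. This orientation is chosen precisely so that the target index $j$ satisfies $1 \le j \le d$ (it does, since $i < j \le d$), which keeps us inside the hypotheses of the lemma even when $i = 0$. Writing $b_1, \dots, b_d$ for the entries of $S_{n,d,i}$, the bound reads $b_j - b_{j+1} + n - d + 1$. Because $j > i$, the entries $b_j$ and $b_{j+1}$ both lie in the top block of $S_{n,d,i}$ and are consecutive integers, i.e.\ $b_{j+1} = b_j + 1$ (using the convention $b_{d+1} = n+1$ when $j = d$); hence the bound collapses to exactly $n-d$, as desired.

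I expect the only delicate points to be bookkeeping rather than conceptual. First, one must orient the pair $(S_{n,d,i}, S_{n,d,j})$ so that \cref{lem:dist_to_source} is invoked with a target index in its permitted range $[1,d]$, so that the index-$0$ source (when present) is used only as the free endpoint $B$. Second, one must read off that the relevant pair of consecutive source entries makes the distance bound simplify exactly to $n-d$, rather than something strictly larger or smaller. Once these two short computations are fixed, the inequality $n-d \le p-1$ contradicts $p \le n-d$, ruling out a second source; combined with the already-established existence of a source in each component, this gives exactly one.
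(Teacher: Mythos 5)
Your proof is correct and follows essentially the same route as the paper: both argue by contradiction that two sources in one $+$-component would force the component to have at least $n-d+1 > p$ vertices, via \cref{lem:dist_to_source}. The only difference is that you invoke that lemma with $B = S_{n,d,i}$ and target index $j$ (whereas the paper uses $B = S_j$ and target index $i$), which is a harmless variation that in fact sidesteps the edge case $i=0$ more cleanly.
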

\begin{proof}
    Suppose we have two source subsets $S_i$ and $S_j$ with $i<j$. 
    Applying \cref{lem:dist_to_source} with $B = S_j$, a path between two source subsets $S_i$ and $S_j$ with $i<j$ has length at least $i - (i+1) + n - d +1 = n-d$. This shows that every $+$-component containing $S_i$ and $S_j$ has at least $n-d+1 > p$ $+$-subsets. 
    Since $\tau$ has $p$ $+$-subsets, it follows that no component of $G_{n,d}[\tau]$ can contain more than one source subset.
    Combined with the fact that each component needs to have at least one source subset, the lemma then follows.
\end{proof}

Motivated by the lemma above, for a co-signotope $\tau \in \bar{\calS}(n,d)$ and for $i\in[0,d]$, we denote by $\C^{\tau}_i$ the $+$-component that contains $S_i$
($\C^{\tau}_i$ may be empty.)

For a co-signotope $\tau \in \bar{\calS}(n,d)$, a $d$-subset $B$, and an index $j \in [d]$, we say the $(\tau,B,j)$-series is \defi{left-aligned} if it starts with a $+$-subset and ends with a $-$-subset, \defi{right-aligned} if it starts with a $-$-subset and ends with a $+$-subset, and \defi{flat} if there is no sign change.
Since every $(\tau,B,j)$-series has $n-d+1$ subsets, if $\tau$ has at most $(n-d)$ $+$-subsets, then no flat series has only $+$-subsets.
Hence, we have the following simple observation.

\begin{observation}
\label{ob:no_flat}
    Let $n,d,p$ be non-negative integers such that $p \leq n-d$.
    Then for every $+$-subset $B$ of a co-signotope $\tau \in \bar{\calS}_p(n,d)$ and for every index $j \in [d]$, the $(\tau,B,j)$-series is not flat.
\end{observation}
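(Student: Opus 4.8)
The plan is a direct counting argument that exploits the fact that $B$ always lies in its own series. Recall from the definition that the $(\tau,B,j)$-series is the sequence $\big(\tau(B_j \cup \{x\})\big)_{x \in [n]\setminus B_j}$, taken in increasing order of $x$, and that it contains $B$ itself (the entry for $x = b_j$). Since $[n]\setminus B_j$ has $n-(d-1) = n-d+1$ elements, this series has exactly $n-d+1$ entries. This length bound, already flagged in the cardinality remark preceding the observation, is the crux.

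First I would observe that because $B$ is a $+$-subset we have $\tau(B)=+$, so the $(\tau,B,j)$-series contains at least one $+$. Next, recall that a series is flat precisely when all of its entries carry the same sign. Hence if the $(\tau,B,j)$-series were flat, the $+$ contributed by $B$ would force \emph{every} one of its $n-d+1$ entries to be $+$; equivalently, all $n-d+1$ subsets $B_j\cup\{x\}$ would be $+$-subsets of $\tau$. In particular $\tau$ would then have at least $n-d+1$ plus-subsets.

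Finally I would derive the contradiction: by hypothesis $\tau \in \bar{\calS}_p(n,d)$ has exactly $p$ plus-subsets with $p \leq n-d < n-d+1$, so $\tau$ cannot have $n-d+1$ plus-subsets. Therefore the $(\tau,B,j)$-series is not flat, as claimed. I expect essentially no obstacle here: the only points to verify are that $B$ occurs in its own series and that the series has length $n-d+1$, after which the conclusion is immediate from the hypothesis $p \le n-d$.
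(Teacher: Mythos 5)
Your argument is correct and is essentially the paper's own: the paper likewise notes that the series has $n-d+1$ entries and contains the $+$-subset $B$, so a flat series would force $n-d+1$ $+$-subsets, contradicting $p \leq n-d$. No issues.
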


The following lemma characterizes which $d$-subsets can be in the same $+$-component as some source subset $S_i$.

\begin{lemma}
\label{lem:one_source_index}
    Let $n,d,c,p,i$ be integers such that $1 \leq c \leq p \leq n-d$ and $0 \leq i \leq d$.
    Further, let $\tau$ be a co-signotope in $\bar{\calS}_{p}(n,d)$.
    Then if the $+$-component $\C^{\tau}_i$ has size $c$ and contains a $+$-subset $B = (b_1, \dots, b_d)$, then we have
    \begin{enumerate}[label=(\alph*)]
        \item For all $j \in [i]$, $b_j \leq c+j-1$ and the $(\tau,B,j)$-series is left-aligned; and
        \item For all $j \in [i+1,d]$, $b_j \geq n-(c+d-j)+1$ and the $(\tau,B,j)$-series is right-aligned.
    \end{enumerate}
\end{lemma}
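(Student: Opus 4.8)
The plan is to reduce the whole statement to the two alignment claims, and then to establish those by propagating outward from the source $S_i$.

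First I would locate $B$ inside each relevant series. Since $B_j$ contains exactly $b_1,\dots,b_{j-1}$ below $b_j$, the subset $B=B_j\cup\{b_j\}$ occupies position $q:=b_j-j+1$ of the $(\tau,B,j)$-series, which has length $N:=n-d+1$. By \cref{ob:no_flat} this series is not flat, so it has exactly one sign change; since consecutive entries of a series are adjacent in $G_{n,d}$, the maximal $+$-run through position $q$ is a connected set of $+$-subsets containing $B$, hence lies in $\C^{\tau}_i$ and has at most $c$ elements. As $B$ is a $+$-subset lying in this run, the run must meet exactly one end of the series: an interior $+$-run would force a second sign change, and a run meeting both ends would be flat, which is excluded. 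Thus the series is either left-aligned (run a prefix) or right-aligned (run a suffix). If left-aligned, the prefix has length $\ge q$ and $\le c$, giving $q\le c$, i.e.\ $b_j\le c+j-1$; if right-aligned, the suffix starts at some $s\le q$ with $N-s+1\le c$, giving $q\ge N-c+1$, i.e.\ $b_j\ge n-(c+d-j)+1$. Hence each inequality in (a) and (b) is an automatic consequence of the corresponding alignment, and it suffices to prove the alignments: left for $j\le i$ and right for $j\ge i+1$.

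Next I would pin the coordinates down unconditionally, which already forces the alignment outside a narrow range. Generalizing the token argument of \cref{lem:dist_to_source}, I track the monovariant $\phi_m(C):=|C\cap[m,n]|$, which changes by at most one per token move and increases only when a token crosses the threshold $m$ upward, so that $m$ is occupied at or just before that step. Comparing $\phi_m(S_i)$ with $\phi_m(B)$ shows that, for $j\le i$, every integer of $[j+1,b_j]$ must be visited along any path from $S_i$ to $B$; since each step visits at most one new integer and $B,S_i$ lie in a component of size $c$ (so their $G_{n,d}$-distance is at most $c-1$), this yields $b_j\le c+j-1$, and the reflection $x\mapsto n+1-x$ gives the symmetric bound for $j\ge i+1$. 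Combined with the dichotomy above, this coordinate bound already excludes the reversed alignment whenever the two inequalities $b_j\le c+j-1$ and $b_j\ge n-(c+d-j)+1$ are incompatible, i.e.\ whenever $2c<n-d+2$.

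The remaining, and genuinely harder, case is to exclude the reversed alignment when $2c\ge n-d+2$, where position and component size alone are inconclusive; here I would use that $B$ is connected to the source. Since by \cref{lem:exact_one_source} the component $\C^{\tau}_i$ has $S_i$ as its unique source, I would propagate the alignment from $S_i$ along a $+$-path to $B$ by induction on the distance to $S_i$. The base case is immediate: in $S_i$ the $j$-th token occupies the extreme position ($q=1$ for $j\le i$, $q=N$ for $j\ge i+1$), so by \cref{ob:no_flat} the series is left- resp.\ right-aligned. For the inductive step, a single edge of the path slides one token to an adjacent free slot; I would compare the $(\tau,\cdot,j)$-series of the two endpoints and use the at-most-one-sign-change axiom to forbid the series from flipping to the wrong alignment. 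The main obstacle is exactly this step: sliding a token past retained elements changes its index, so correctly matching the series before and after the move and transferring each alignment through the single-sign-change condition is where the bulk of the technical work lies.
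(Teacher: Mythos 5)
Your reduction of the inequalities to the alignment claims is correct (and a nice reorganization: the bound $b_j\le c+j-1$ resp.\ $b_j\ge n-(c+d-j)+1$ does follow from the corresponding alignment together with $|\C^{\tau}_i|=c$), and your unconditional coordinate bounds via the token/monovariant argument are also sound. But the proof has a genuine gap exactly where you flag it: when $2c\ge n-d+2$ (which happens for most of the admissible range, since $c$ may be as large as $p\le n-d$), the two inequalities are compatible and the dichotomy alone does not rule out the reversed alignment. Your fallback --- induction along a $+$-path from $S_i$, transferring alignment across a single edge of $G_{n,d}$ --- is only a plan, and its inductive step is not routine: a single move can change the index of the moved token, and relating the $(\tau,B,j)$-series to the $(\tau,B',j)$-series for the \emph{unmoved} indices requires the one-sign-change axiom applied to different series than the ones whose alignment you are tracking. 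As written, the argument does not close.

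The paper avoids this entirely with a global counting argument that you are missing. Suppose $j\le i$ and the $(\tau,B,j)$-series were right-aligned; then every subset after $B$ in the $(B,j)$-series is a $+$-subset, so every element of $[b_j,n]$ occurs in some $+$-subset of $\C^{\tau}_i$, and the token argument along a path from $S_i$ to $B$ (plus $[i]\subseteq S_i$) shows every element of $[b_j-1]$ does as well --- so all $n$ elements of $[n]$ appear in $+$-subsets of the component. On the other hand, starting from the $d$ elements of $S_i$ and adding at most one new element per edge, a $+$-component with at most $p$ subsets can involve at most $d+p-1\le n-1$ distinct elements, a contradiction. This single observation disposes of the reversed alignment for all $c\le p\le n-d$ at once, with no case split on the size of $c$ and no induction along the path. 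I would recommend replacing your Part 3/4 with this counting step; your Parts 1 and 2 can then be kept essentially as they are.
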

\begin{proof}
    We first consider the case when $j \in [i]$.
    By \cref{lem:dist_to_source}, the shortest path between $B$ and $S_i$ is at least $b_i - b_{i+1} + n - d+1$.
    Since $B$ and $S_i$ are in the same $+$-component $\C^{\tau}_i$ of size $c$, this implies $b_i - b_{i+1} + n - d+1 \leq c-1$.
    Moreover since $b_{i+1} < b_{i+2} < \dots < b_d \leq n$, we obtain $b_{i+1} \leq n-d+i+1$.
    Together this implies
    \[
    	b_i \leq (c-1) - (- b_{i+1} + n - d+1) = c+i-1 + b_{i+1} - (n-d+i+1) \leq c+i-1.
    \]
    Combining this with the fact that $b_j \leq b_i - (i-j)$, we have $b_j \leq c+j-1$ as required.

    We now prove by contradiction that the $(\tau,B,j)$-series is left-aligned.
    Suppose this is not the case.
    Combined with \cref{ob:no_flat}, $(\tau,B,j)$-series must then be right-aligned, i.e., all of the subsets after $B$ in the $(B,j)$-series then have to be $+$-subsets.
    That means each element in $[b_j,n]$ appears in some $+$-subset.
    If $b_j = j$, then also all elements in $[b_j]$ appear in $S_i$, a $+$-subset.
    Otherwise, using the same token argument as in the proof of \cref{lem:dist_to_source}, each element in $[i+1,b_i]$ is contained in some $+$-subset along the path from $S_i$ to $B$ in $\C^{\tau}_i$.
    Combining with the fact that $[i] \subseteq S_i$, we obtain that each element in $[b_i]$ appears in some $+$-subset.
    Overall, each element in $[n]$ appears in some $+$-subset.

    Next, observe that going from a subset to an adjacent subset in $G_{n,d}$, we remove one element and add one element.
    Hence, since $\C^{\tau}_i$ has at most $p$ elements, the number of elements appearing in at least one $+$-subset is at most $d+p-1 \leq n-1$.
    This contradicts with the conclusion above that each element in $[n]$ appears in some $+$-subset.

    Hence, (a) holds.
    With a symmetric argument, we also obtain (b), completing the proof of the lemma.
\end{proof}

A direct consequence of the lemma above is that for a given $\tau \in \bar{\calS}_{p}(n,d)$, we can identify to which $+$-component a $+$-subset belongs. 

\begin{corollary}
\label{lem:which_component}
    Let $n,d,p$ be integers such that $1 \leq p \leq n-d$.
    Further, let $\tau$ be a co-signotope in $\bar{\calS}_{p}(n,d)$ and $B = (b_1, \dots, b_d)$ be a $+$-subset of $\tau$.
    Then $B$ is contained in the $+$-component $\C^{\tau}_i$, where $i$ is either the largest index in $[d]$ such that the $(\tau,B,i)$-series is left-aligned or zero if such an index does not exist.
\end{corollary}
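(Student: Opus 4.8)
The plan is to prove \cref{lem:which_component} as an almost immediate consequence of \cref{lem:one_source_index}. First I would establish that the index characterization is well-defined, meaning that a $+$-subset $B$ belongs to exactly one $+$-component $\C^{\tau}_i$, and that the integer $i$ extracted from the left-aligned pattern indeed matches that component's source index. The key observation from \cref{lem:one_source_index} is the dichotomy it forces on the $(\tau,B,j)$-series depending on whether $j$ lies in $[i]$ or in $[i+1,d]$: for $B \in \C^{\tau}_i$, every series with $j \le i$ is left-aligned and every series with $j > i$ is right-aligned. Combined with \cref{ob:no_flat}, which rules out flat series entirely when $p \le n-d$, this means each of the $d$ indices $j$ produces either a strictly left-aligned or strictly right-aligned series, with no ambiguity.

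Next I would argue that this forces a clean threshold structure. Fix the $+$-component $\C^{\tau}_i$ containing $B$; by \cref{lem:one_source_index}, the set of indices $j$ for which $(\tau,B,j)$ is left-aligned is exactly $[i]$, and the set for which it is right-aligned is exactly $[i+1,d]$. Consequently, the largest index $j$ with a left-aligned $(\tau,B,j)$-series is precisely $i$ when $i \ge 1$, and no such index exists precisely when $i=0$ (since then $[i]$ is empty and all series are right-aligned). This is exactly the index prescribed in the statement, so recovering $i$ from $B$ alone via this rule correctly identifies the component $\C^{\tau}_i$ to which $B$ belongs.

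The only subtlety I would want to be careful about — and the step I expect to be the main obstacle — is the edge behavior at $i=0$ and $i=d$, where the source subset $S_0 = (n-d+1, \dots, n)$ and $S_d = (1, \dots, d)$ sit at the extremes. I would check that for $i=0$ the claim "no index is left-aligned" is consistent with part (b) of \cref{lem:one_source_index} applied with $i=0$ (all indices in $[1,d]$ are right-aligned), and that for $i=d$ all indices in $[d]$ are left-aligned, so the largest such index is $d$, matching the component. A second point to verify is that the lemma's hypothesis $1 \le c \le p$ is met, i.e., that the component containing $B$ is nonempty and of size at most $p$; this is guaranteed since $B$ itself is a $+$-subset and $\tau \in \bar{\calS}_p(n,d)$. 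Once these boundary cases are dispatched, the corollary follows directly, and the proof is short.
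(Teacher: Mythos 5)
Your proposal is correct and follows exactly the route the paper intends: the paper states this corollary without a written proof, presenting it as a direct consequence of \cref{lem:one_source_index}, and your argument (left-aligned for $j\in[i]$, right-aligned for $j\in[i+1,d]$, hence the largest left-aligned index recovers $i$, with the $i=0$ case handled by emptiness of $[i]$) is precisely that deduction, with the boundary checks spelled out.
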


\section{Co-signotopes with only one $+$-component}
\label{sec:onepluscomponent}

As a building block for the bijection, in this section, we consider $\tau \in \bar{\calS}_p(n,d)$ such that $n \geq d+p$ and $\tau$ has exactly one $+$-component.
We give a characterization of such a co-signotope via a variant of high-dimensional Ferrers diagrams.
For convenience, we denote by $\bar{\calS}_{p,i}(n,d)$ the set of all co-signotopes $\tau \in \bar{\calS}_p(n,d)$ such that $\C^{\tau}_i$ is the only non-empty $+$-component of $\tau$.

To facilitate the proof of the bijection we define a function $g_{n,d,i}$ for integers $n,d,i$ such that $0 \leq i \leq d < n$. This function allows us to consider the $+$-component locally with new coordinates, where $S_i$ has coordinate $(1,\ldots, 1)$, and to allude to the Ferrers diagrams.
In the first step, we consider the following auxiliary function:
\[
	f_{n,d,i,j}(x) =
	\begin{cases}
		x - j + 1& \text{if } 1 \leq j \leq i \\
		(n - x) - (d - j) + 1 & \text{if } i < j \leq d.
	\end{cases}
\]

Then we can now define $g_{n,d,i}$ that maps a $d$-subset to a point in the $d$-dimenional space.
\begin{definition}
\label{def:g}
    Let $n,d,i$ be integers such that $0 \leq i \leq d < n$.
    Then the function $g_{n,d,i}$ maps each $d$-subset $B = (b_1, \dots, b_d)$ to the tuple 
    \[
        \big(f_{n,d,i,1}(b_1), f_{n,d,i,2}(b_2) \dots, f_{n,d,i,d}(b_d)\big).
    \]
\end{definition}

We have the following observation on this function $g_{n,d,i}$.
\begin{observation} 
\label{ob:one_source_g}
    The function $f_{n,d,i,j}$ is injective and has an inverse, and hence, the same holds for $g_{n,d,i}$.
    Further, if $B$ is a $+$-subset of a co-signotope $\tau \in \bar{\calS}_p(n,d)$, then by \cref{lem:one_source_index} and the fact that $b_j \in [j,n-(d-j)]$, each index of $g_{n,d,i}(B)$ is in $[p]$.
    In particular, $g_{n,d,i}(S_i) = (1, \dots, 1)$.
\end{observation}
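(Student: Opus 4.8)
The plan is to verify the three assertions separately, since each reduces to a direct computation once the relevant bounds are in place.

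First I would establish injectivity and invertibility of $f_{n,d,i,j}$. For each fixed $j$ this is an affine map in $x$ of slope $\pm 1$: it is $x \mapsto x - j + 1$ when $1 \le j \le i$, and $x \mapsto (n-d+j+1) - x$ when $i < j \le d$ (since $(n-x)-(d-j)+1 = (n-d+j+1)-x$). Both are strictly monotone, hence injective, with explicit inverses $y \mapsto y + j - 1$ and $y \mapsto (n-d+j+1) - y$ respectively. Because the $j$-th coordinate of $g_{n,d,i}(B)$ depends only on $b_j$, the map $g_{n,d,i}$ acts coordinate-wise, so inverting each $f_{n,d,i,j}$ recovers $B$ from its image; thus $g_{n,d,i}$ is injective and invertible onto its image.

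Next I would show that every coordinate of $g_{n,d,i}(B)$ lies in $[p]$ for a $+$-subset $B$ of $\tau$. Since $\tau$ has at most $p$ $+$-subsets, the $+$-component $\C^{\tau}_i$ containing $B$ has size $c \le p$, so \cref{lem:one_source_index} applies with this $i$. For $j \in [i]$, part (a) gives $b_j \le c + j - 1$, whence $f_{n,d,i,j}(b_j) = b_j - j + 1 \le c \le p$, while $b_j \ge j$ gives $f_{n,d,i,j}(b_j) \ge 1$. For $j \in [i+1,d]$, part (b) gives $b_j \ge n - (c+d-j) + 1$, whence $f_{n,d,i,j}(b_j) = (n-b_j) - (d-j) + 1 \le c \le p$, while $b_j \le n - (d-j)$ gives $f_{n,d,i,j}(b_j) \ge 1$. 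In both cases the coordinate lies in $[1,p] = [p]$.

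Finally I would check $g_{n,d,i}(S_i) = (1,\dots,1)$ by substituting the coordinates of $S_i = (1,\dots,i,\,n-d+i+1,\dots,n)$, i.e. $b_j = j$ for $j \in [i]$ and $b_j = n-d+j$ for $j \in [i+1,d]$: for $j \in [i]$ we get $f_{n,d,i,j}(j) = j - j + 1 = 1$, and for $j \in [i+1,d]$ we get $f_{n,d,i,j}(n-d+j) = (d-j) - (d-j) + 1 = 1$.

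I do not expect a genuine obstacle here, as all three parts are elementary. The only points requiring care are matching the two branches of $f_{n,d,i,j}$ with the corresponding parts (a) and (b) of \cref{lem:one_source_index}, and invoking the elementary bounds $j \le b_j \le n-(d-j)$ at the lower end so that each coordinate is at least $1$ rather than merely at most $p$. The crucial input is that the component size $c$ is bounded by the total count $p$, which is precisely what forces the upper bound $p$ on every coordinate.
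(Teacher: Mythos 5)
Your proposal is correct and follows exactly the route the paper intends: the paper states this as an observation with its justification inline (monotonicity of the affine maps $f_{n,d,i,j}$, the bounds from \cref{lem:one_source_index}(a)--(b) combined with $j \leq b_j \leq n-(d-j)$, and direct substitution for $S_i$), and your write-up simply makes those computations explicit. No gaps; the arithmetic in both branches checks out.
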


As mentioned before, $g_{n,d,i}$ allows us to make a connection with the Ferrers diagram of an integer partition~\cite{MR2122332}.
An \defi{integer partition} of a non-negative integer $n$ is a way of writing $n$ as a sum of positive integers, where the summands are sorted in the nonincreasing order.
The \defi{graphical representation} or \defi{Ferrers diagram} of an integer partition of $n$ is a set of points with positive integral indices in the plane such that $(i,j)$ is in the set only if $(i',j')$ is also in the set for all $i' \leq i$, $j' \leq i$.
It is easy to see the one-to-one correspondence between an integer partition and its graphical representation.
See \cref{fig:ferrers}(left) for an illustration.

\begin{figure}
    \centering
    \includegraphics{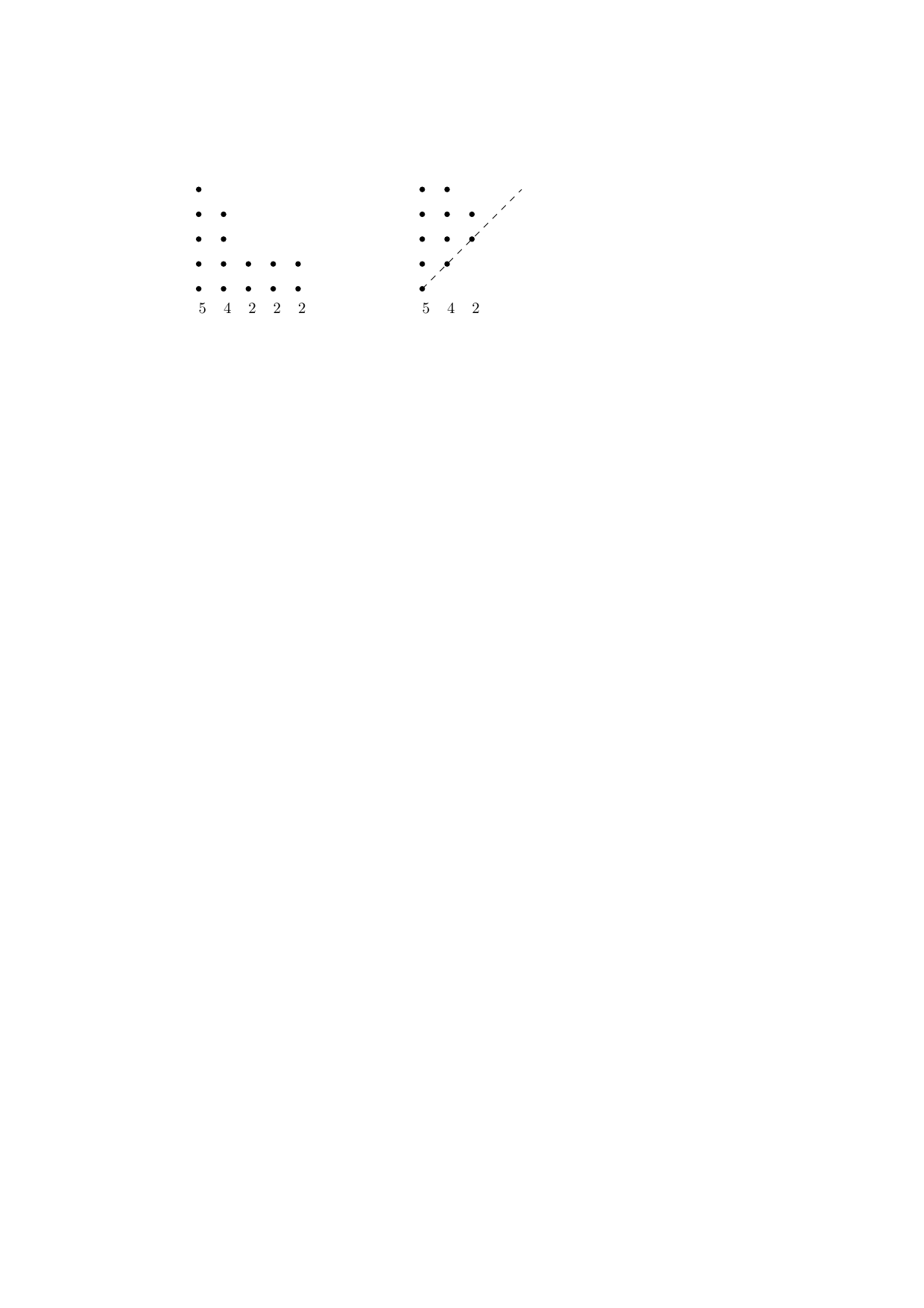}
    \caption{\textbf{Left}: A Ferrers diagram with the corresponding integer partition of 15, where the bottom-leftmost point corresponds to (1,1). 
    Note that this is also a $(2,1)$-Ferrers diagram with respect to 15.
    \textbf{Right}: A $(2,2)$-Ferrers diagram with respect to 11. 
    Since $a_2 \geq a_1$, there cannot be any point below the dashed line. The numbers are the count of points in each vertical line. 
    Observe that these numbers must be strictly decreasing.}
    \label{fig:ferrers}
\end{figure}

We define the following generalization of the Ferrers diagram.
\begin{definition}[$(d,i)$-Ferrers diagram]
\label{def:gen_ferrers}
    Let $Z(d,i)$ be the set of all points $(a_1, \dots, a_d)$ in $\Z^d_{>0}$ that satisfy $a_j \geq a_{j-1}$ for all $j \in [2,i]$, and $a_j \geq a_{j+1}$ for all $j \in [i+1,d-1]$.
	A \defi{$(d,i)$-Ferrers diagram} with respect to $p$ is a set $\F$ of $p$ points in $Z(d,i)$ such that $(a_1, \dots, a_d)$ is in $\F$ only if $(a'_1, \dots, a'_d)$ is also in $\F$, for all $(a'_1, \dots, a'_d) \in Z(d,i)$ that satisfies $a'_j \leq a_j$, $j \in [d]$.
\end{definition}

See \cref{fig:ferrers}(right) for an illustration of a $(d,i)$-Ferrers diagram.
Note that (2,1)-Ferrers diagrams are exactly the Ferrers diagrams.

We are now ready to characterize the co-signotopes in $\bar{\calS}_{p,i}(n,d)$.

\begin{lemma}
\label{lem:one_source_char}
    Let $n,d,p,i$ be integers such that $1 \leq p \leq n-d$ and $0 \leq i \leq d$.
    There is a one-to-one correspondence between a co-signotope $\tau$ in $\bar{\calS}_{p,i}(n,d)$ and a $(d,i)$-Ferrers diagram~$\F_{\tau}$ with respect to $p$.
    Further, this correspondence is described by $\F_{\tau} = \{g_{n,d,i}(B) \mid B \in \tau^{-1}(+)\}$. 
\end{lemma}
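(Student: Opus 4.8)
The plan is to exhibit the map $\Phi\colon \tau\mapsto \F_\tau:=\{g_{n,d,i}(B)\mid B\in\tau^{-1}(+)\}$ together with a candidate inverse $\Psi\colon \F\mapsto\tau_\F$, where $\tau_\F$ is the sign function with $\tau_\F(B)=+$ if and only if $g_{n,d,i}(B)\in\F$. Abbreviate $g:=g_{n,d,i}$. By \cref{ob:one_source_g} the map $g$ is injective, so $\F_\tau$ determines $\tau^{-1}(+)$ and hence $\tau$; thus $\Phi$ is automatically injective and $\Psi\circ\Phi=\mathrm{id}$. It remains to prove: (i) $\Phi$ lands in the set of $(d,i)$-Ferrers diagrams with respect to $p$; (ii) $\Psi$ lands in $\bar{\calS}_{p,i}(n,d)$; and (iii) $\Phi\circ\Psi=\mathrm{id}$, i.e.\ $\F_{\tau_\F}=\F$.

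For (i), note first $|\F_\tau|=p$ since $g$ is injective and $\tau$ has $p$ plus-subsets. A direct computation shows that the defining inequalities of $Z(d,i)$ translate under $g^{-1}$ exactly into the condition $b_1<\dots<b_d$, so every $d$-subset is sent into $Z(d,i)$ (positivity from $b_j\in[j,\,n-d+j]$); hence $\F_\tau\subseteq Z(d,i)$. The heart of (i) is downward-closedness. I would reduce it to single unit decreases: if $a'\le a$ are both in $Z(d,i)$, one can descend from $a$ to $a'$ by decreasing one coordinate at a time while staying in $Z(d,i)$, decreasing the left coordinates $1,\dots,i$ in increasing order of index and the right coordinates $d,d-1,\dots,i+1$ in decreasing order of index. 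A unit decrease of coordinate $j$ corresponds under $g^{-1}$ to moving the token at position $j$ one step toward $S_i$, i.e.\ to the subset immediately preceding (if $j\le i$) or following (if $j>i$) $B$ in the $(\tau,B,j)$-series. Since \cref{lem:one_source_index} guarantees this series is left-aligned (resp.\ right-aligned), the single-sign-change property forces this neighbour to again be a plus-subset, so $\F_\tau$ is downward closed.

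For (ii) and (iii) I need cardinality bounds on a $(d,i)$-Ferrers diagram $\F$ coming from $|\F|=p\le n-d$. Using the ``staircase'' chains $c^{(k)}=(\min(a_1,k),\dots,\min(a_d,k))$ inside the down-set, every coordinate of every point of $\F$ is at most $p$; a two-dimensional ``rectangle'' chain, letting coordinates $i,i+1$ range over $[1,a_i]\times[1,a_{i+1}]$ while setting the other coordinates to $\min(a_\ell,\cdot)$, gives the sharper bound $a_ia_{i+1}\le p$, whence $a_i+a_{i+1}\le p+1$. These bounds show $\F\subseteq\operatorname{im}(g)$: the only non-automatic constraint for $a$ to equal $g(B)$ of an actual $d$-subset is $b_i<b_{i+1}$, equivalent to $a_i+a_{i+1}\le n-d+2$, which holds since $a_i+a_{i+1}\le p+1\le n-d+1$. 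Consequently $\F_{\tau_\F}=\F\cap\operatorname{im}(g)=\F$, proving (iii), and $\tau_\F$ has exactly $p$ plus-subsets. Moreover $(1,\dots,1)=g(S_i)\in\F$ by downward-closedness, and descending from any point of $\F$ to $(1,\dots,1)$ by unit steps exhibits a path of $G_{n,d}$-edges between plus-subsets from any plus-subset to $S_i$; hence $\C^{\tau_\F}_i$ is the unique non-empty plus-component, giving (ii).

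The main obstacle is showing $\tau_\F$ is a co-signotope, i.e.\ that every $(\tau_\F,C,k)$-series has at most one sign change. Suppose not; then there are $x_1<x_2<x_3$, with $D_{x_t}:=C_k\cup\{x_t\}$, realising the pattern $(-,+,-)$ or $(+,-,+)$. Writing $m_t$ for the sorted position of $x_t$ in $D_{x_t}$, a short computation shows $g(D_{x_1}),g(D_{x_2}),g(D_{x_3})$ are ordered in the coordinatewise order of $Z(d,i)$, with strict changes confined to positions $[m_1,m_2]$ and $[m_2,m_3]$. In the $(-,+,-)$ case one gets $g(D_{x_1})\le g(D_{x_2})$ when $m_2\le i$ and $g(D_{x_3})\le g(D_{x_2})$ when $m_2>i$, so downward-closedness forces $D_{x_1}$ or $D_{x_3}$ to be a plus-subset, a contradiction. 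In the $(+,-,+)$ case, $m_1>i$ gives $g(D_{x_2})\le g(D_{x_1})$ and $m_3\le i$ gives $g(D_{x_2})\le g(D_{x_3})$, each forcing $D_{x_2}$ to be a plus-subset, again a contradiction. The only surviving case is $m_1\le i<m_3$; here one shows there is a unique $c^*\in C_k$ occupying position $i$ in $D_{x_3}$ and position $i+1$ in $D_{x_1}$, which forces $\gamma_i+\alpha_{i+1}=n-d+3$ for $\gamma:=g(D_{x_3})$ and $\alpha:=g(D_{x_1})$. Two disjoint chains inside the down-sets of $\gamma$ and $\alpha$ (one ranging over coordinate $i$, the other over coordinate $i+1$, all other coordinates $1$) then yield $|\F|\ge\gamma_i+\alpha_{i+1}-1=n-d+2>p$, contradicting $|\F|=p$. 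This last case, where the hypothesis $p\le n-d$ is essential, is the crux; the remaining bookkeeping is routine. Combining (i)--(iii), $\Phi$ and $\Psi$ are mutually inverse bijections realising the stated correspondence.
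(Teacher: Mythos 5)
Your proposal is correct and follows essentially the same route as the paper: the coordinate change $g_{n,d,i}$, downward-closedness of $\F_\tau$ via unit decreases combined with the aligned series from \cref{lem:one_source_index}, the bound $a_i+a_{i+1}\le p+1$ to show preimages are genuine $d$-subsets, and connectivity by descending to $(1,\dots,1)$. The main difference is that you spell out in full the two steps the paper leaves implicit (the verification that $\tau_\F$ is a co-signotope, via the $(+,-,+)$/$(-,+,-)$ case analysis, and the cardinality bound via the rectangle chain rather than the index-sum chain), and both of those arguments check out.
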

\begin{proof}
    Suppose we have a co-signotope $\tau \in \bar{\calS}_{p,i}(n,d)$, we show that for $\F_{\tau} := \{g_{n,d,i}(B) \mid B \in \tau^{-1}(+)\}$, the set $\F_{\tau}$ is a $(d,i)$-Ferrers diagram with respect to $p$.
    By \cref{ob:one_source_g}, the elements of $\F_{\tau}$ have positive integral indices, and $g_{n,d,i}$ is injective.
    Combined with the fact that $\tau$ has $p$ $+$-subsets, this implies that $\F_{\tau}$ has $p$ elements.

    We start by showing that $\F_{\tau} \subset Z(d,i)$, for $Z(d,i)$ as defined in \cref{def:gen_ferrers}.
    Consider a $+$-subset $B = (b_1, \dots, b_d)$.
    For $j \in [2,i]$, we have $b_j > b_{j-1}$ by our convention of a subset.
    Hence, $f_{n,d,i,j}(b_j) + j - 1 > f_{n,d,i,j-1}(b_{j-1}) + (j-1) - 1$, which is equivalent to $f_{n,d,i,j}(b_j) \geq f_{n,d,i,j-1}(b_{j-1})$.
    Similarly, for $j \in [i+1,n-1]$, we obtain $f_{n,d,i,j}(b_j) \geq f_{n,d,i,j+1}(b_{j+1})$.
    It follows that $\F_{\tau} \subset Z(d,i)$.
    Then \cref{lem:one_source_index} implies that $\F_{\tau}$ satisfies the condition of a $(d,i)$-Ferrers diagram with respect to~$p$ in \cref{def:gen_ferrers}.
    % Hence, $\F_{\tau}$ is a $(d,i)$-Ferrers diagram .

    For the other direction, suppose we have a $(d,i)$-Ferrers diagram $\F$ with respect to $p$.
    We show that the sign function $\tau$ such that $\tau^{-1}(+) = \{g^{-1}_{n,d,i}(X) \mid X \in \F\}$ is a co-signotope in $\bar{\calS}_{p,i}(n,d)$.
    We first show that for every $X = (a_1, \dots, a_d) \in \F$, $g^{-1}_{n,d,i}(X)$ is a $d$-subset.
    Indeed, since $\F \subset Z(d,i)$, it is easy to see that for $j \in [2,i] \cup [i+2,n]$, $f^{-1}_{n,d,i,j}(a_j) > f^{-1}_{n,d,i,j-1}(a_{j-1})$.
    For $j=i+1$, note that if the sum of the indices of $X$ is $s$, then by \cref{def:gen_ferrers}, it is easy to see that for $k \in [d,s]$, there exists an element $Y$ in $\F$ whose sum of indices is exactly $k$.
    Combining this with the fact that there are $p$ elements in $\F$, we obtain that $s \leq p + d - 1$, and consequently, $a_i + a_{i+1} \leq p+1$.
    This implies
    \[
        f^{-1}_{n,d,i,i+1}(a_{i+1}) - f^{-1}_{n,d,i,i}(a_{i}) = (n - a_{i+1} - d + i + 2) - (a_i + i - 1) = (n-d + 3) - (a_{i+1} + a_i) \geq 2.
    \]
    This completes the proof that $g^{-1}_{n,d,i}(X)$ is a $d$-subset.
    
    Next, we show that $\tau$ is a co-signotope, i.e., for every $d$-subset $B$ and index $j$, the $(\tau,B,j)$-series has at most one sign change.
    This is implied by the condition of a $(d,i)$-Ferrers diagram in \cref{def:gen_ferrers}.
    
    Lastly, observe that for any element $X = (a_1, \dots, a_d)$ in $\F$, if it is not $(1,\dots, 1)$, then we can find an index $j$ such that the element $X'$ obtained from $X$ by replacing $a_j$ with $a_j-1$ is in $Z(d,i)$. 
    Hence, $X'$ is in $\F$.
    Repeating this process, we obtain a sequence of elements in $\F$ starting from $X$ and ending at $(1,\dots,1)$.
    This corresponds to a walk in the graph $G_{n,d}$ from a $+$-subset of $\tau$ to $S_i$.
    (Recall from \cref{ob:one_source_g} that $g^{-1}_{n,d,i}((1, \dots, 1)) = S_i$.)
    This implies that all $+$-subsets of $\tau$ are connected to $S_i$ in $G_n[\tau]$.
    Hence, there is only one $+$-component, and this $+$-component contains $S_i$.
    Since $g^{-1}_{n,d,i}$ is injective, the $+$-component has $p$ subsets.
    This completes the proof of the lemma.
\end{proof}

Observe that the set $\F_{\tau}$ in \cref{lem:one_source_char} depends only on $d$, $i$, and $p$, but not on $n$.
Hence, this lemma straighforwardly implies a bijection between $\bar{\calS}_{p,i}(n,d)$ and $\bar{\calS}_{p,i}(\tn,d)$ such that $\min\{n,\tn\} \geq p+d$, as follows.
\begin{corollary}
\label{cor:one_source_bijection}
    Let $n,\tn,d,p,i$ be integers such that $1 \leq p \leq \min\{n,\tn\}-d$ and $0 \leq i \leq d$.
    Let $h_{n,\tn,d,p,i}$ map a co-signotope $\tau$ in $\bar{\calS}_{p,i}(n,d)$ to a sign function $\tilde{\tau}: {[\tn] \choose d} \to \{+,-\}$ such that 
    \[ 
        \tilde{\tau}^{-1}(+) = \Big\{g^{-1}_{\tn,d,i}\big(g_{n,d,i}(B)\big) \mid B \in \tau^{-1}(+)\Big\}.
    \]
    Then $h_{n,\tn,d,p,i}$ is a bijection between $\bar{\calS}_{p,i}(n,d)$ and $\bar{\calS}_{p,i}(\tn,d)$ that preserves the single step relation.
\end{corollary}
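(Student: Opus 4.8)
The plan is to derive the corollary directly from the characterization in \cref{lem:one_source_char}, exploiting the crucial feature noted just before the statement: the $(d,i)$-Ferrers diagram associated with $\tau$ depends only on $d$, $i$, and $p$, and not on $n$. Write $\Phi_n$ for the map $\tau \mapsto \{g_{n,d,i}(B) \mid B \in \tau^{-1}(+)\}$ sending a co-signotope to its Ferrers diagram, and likewise $\Phi_{\tn}$ for the analogous map on $\bar{\calS}_{p,i}(\tn,d)$. By \cref{lem:one_source_char}, both $\Phi_n$ and $\Phi_{\tn}$ are bijections onto the \emph{same} set $\F_{d,i,p}$ of $(d,i)$-Ferrers diagrams with respect to $p$ (this uses $p \leq n-d$ and $p \leq \tn-d$, both guaranteed by the hypothesis $p \leq \min\{n,\tn\}-d$). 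I would then show that $h_{n,\tn,d,p,i} = \Phi_{\tn}^{-1}\circ\Phi_n$, which immediately yields that it is a bijection, with inverse $h_{\tn,n,d,p,i} = \Phi_n^{-1}\circ\Phi_{\tn}$.

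The one point that needs verification is that $h_{n,\tn,d,p,i}$, as given by the explicit formula, indeed equals $\Phi_{\tn}^{-1}\circ\Phi_n$; equivalently, that $\F_{\tilde\tau} = \F_{\tau}$ where $\tilde\tau := h_{n,\tn,d,p,i}(\tau)$. First I would invoke the forward direction of \cref{lem:one_source_char} to get $\F_{\tau} := \Phi_n(\tau) \in \F_{d,i,p}$. Then, since $\F_{\tau}$ is a valid $(d,i)$-Ferrers diagram with respect to $p$ and $p \leq \tn-d$, the backward direction of the same lemma (applied with $\tn$ in place of $n$) guarantees that $\{g^{-1}_{\tn,d,i}(X) \mid X \in \F_{\tau}\}$ is the $+$-set of a genuine co-signotope in $\bar{\calS}_{p,i}(\tn,d)$; this is precisely $\tilde\tau^{-1}(+)$, so $h$ is well defined. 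Applying $g_{\tn,d,i}$ to this set and using that $g_{\tn,d,i}$ is invertible on the relevant range (\cref{ob:one_source_g}) gives $\F_{\tilde\tau} = \{g_{\tn,d,i}(g^{-1}_{\tn,d,i}(X)) \mid X \in \F_{\tau}\} = \F_{\tau}$, as desired.

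It remains to address the single step relation. Here I would first observe that, since every element of $\bar{\calS}_{p,i}(n,d)$ has exactly $p$ $+$-subsets, no two of them can be related by a single step (which strictly increases the number of $+$-subsets by one); the same holds in $\bar{\calS}_{p,i}(\tn,d)$, so the single step relation is empty on both sides and is therefore preserved trivially. The content actually used later is the stronger fact that $h$ acts by relabeling each $+$-subset individually through the fixed, $n$-independent bijection $\phi := g^{-1}_{\tn,d,i}\circ g_{n,d,i}$, i.e.\ $\tilde\tau^{-1}(+) = \phi\big(\tau^{-1}(+)\big)$; consequently $h$ intertwines the operation of adjoining one $+$-subset, since $\tau'^{-1}(+) = \tau^{-1}(+)\sqcup\{B_0\}$ forces $\tilde\tau'^{-1}(+) = \tilde\tau^{-1}(+)\sqcup\{\phi(B_0)\}$, and symmetrically for the inverse. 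I expect the only mildly delicate step to be the bookkeeping in the previous paragraph, namely checking that $g^{-1}_{\tn,d,i}$ really lands in valid $d$-subsets and that $\Phi_{\tn}$ has the same image $\F_{d,i,p}$ as $\Phi_n$; but this is exactly what \cref{lem:one_source_char} supplies, so no idea beyond that lemma is required.
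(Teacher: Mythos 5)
Your proposal is correct and follows essentially the same route as the paper, which derives the corollary directly from \cref{lem:one_source_char} via the observation that the Ferrers diagram $\F_{\tau}$ depends only on $d$, $i$, and $p$ and not on $n$, so that $h_{n,\tn,d,p,i}$ is the composition of the two one-to-one correspondences. Your additional remarks on the single-step relation (in particular that $h$ acts subset-by-subset through the $p$-independent map $g^{-1}_{\tn,d,i}\circ g_{n,d,i}$) correctly supply the detail the paper leaves implicit and later relies on in the proof of \cref{thm:bijection}.
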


\section{The $+$-component decomposition of a co-signotope}
\label{sec:decompositionPlusComp}

In the previous section, we studied co-signotopes with only one $+$-component. In this section, we go a step further and investigate how the $+$-subsets of a general co-signotope in $\bar{\calS}_{p}(n,d)$ can be decomposed into several $+$-components. 
The \defi{p-sequence} of a co-signotope $\tau$ in $\bar{\calS}_{p}(n,d)$ is the $(d+1)$-tuple $\big(p_0, \dots, p_d)$, where $p_i$ is the size of the $+$-component that contains $S_i$ for $i \in [0,d]$ (i.e., $p_i = |\C^{\tau}_i|$).
If we restrict the co-signotope $\tau$ such that we keep only one of its $+$-components as a $+$-component and set all remaining $+$-components to $-$, we still get a co-signotope.
More precisely, we define the \defi{$+$-component decomposition} of a co-signotope $\tau \in \bar{\calS}_{p}(n,d)$, denoted by $\Delta(\tau)$,
as the $(d+1)$-tuple $(\tau_0,\tau_1,\dots,\tau_d)$ such that for $i \in [0,d]$, each $\tau_i$ is a sign function with domain $\binnd$, where $\tau_i^{-1}(+)$ is exactly $\C^{\tau}_i$.
The following lemma shows that each $\tau_i$ is still a co-signotope. 

\begin{lemma}
\label{lem:decomp}
	Let $n,d,p$ be integers such that $0 \leq p \leq n-d$.
	For every co-signotope $\tau$ in $\bar{\calS}_{p}(n,d)$ with $\Delta(\tau) = (\tau_0,\tau_1,\dots,\tau_d)$, $\tau_i$ is a co-signotope in $\bar{\calS}_{|\C^{\tau}_i|,i}(n,d)$ for $i \in [0,d]$.
\end{lemma}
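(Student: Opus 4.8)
The plan is to verify, for each $i \in [0,d]$, the two requirements hidden in the claim $\tau_i \in \bar{\calS}_{|\C^\tau_i|,i}(n,d)$: that $\tau_i$ is a co-signotope, and that $\C^\tau_i$ is its unique non-empty $+$-component. The whole argument rests on one structural observation, which I would prove first: \emph{within any single $(\tau,B,j)$-series, all the $+$-subsets lie in one and the same $+$-component of $\tau$}. Since $\tau$ is a co-signotope, this series has at most one sign change, so its $+$-subsets form a contiguous block at one end. By the definition of the edges of $G_{n,d}$, consecutive entries of the $(B,j)$-series are adjacent in $G_{n,d}$ (they differ only in the $j$-th token), so consecutive $+$-subsets of the block are joined by an edge of $G_{n,d}[\tau]$; by transitivity the entire block belongs to a common component $\C^\tau_k$.

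Granting this, checking that $\tau_i$ satisfies the co-signotope condition is a short case analysis on an arbitrary $(B,j)$-series. If the common component $\C^\tau_k$ of its $+$-subsets is $\C^\tau_i$, then passing from $\tau$ to $\tau_i$ leaves every entry of the series unchanged, so it still has at most one sign change. If $k \ne i$, every $+$-subset of the series is reset to $-$, so the $(\tau_i,B,j)$-series is flat and has no sign change. In both cases the one-sign-change bound holds, so $\tau_i \in \bar{\calS}(n,d)$; and $\tau_i^{-1}(+) = \C^\tau_i$ by construction gives $|\tau_i^{-1}(+)| = |\C^\tau_i|$.

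For the single-component requirement, suppose first $\C^\tau_i \ne \varnothing$. Its vertex set is a connected component of $G_{n,d}[\tau]$, and because the edges of $G_{n,d}$ depend only on its vertices, the induced subgraph $G_{n,d}[\tau_i]$ on $\tau_i^{-1}(+) = \C^\tau_i$ is exactly the induced subgraph of $G_{n,d}$ on $\C^\tau_i$, which is therefore connected and contains $S_i$. Hence $\tau_i$ has a single $+$-component, namely $\C^\tau_i$, so $\tau_i \in \bar{\calS}_{|\C^\tau_i|,i}(n,d)$. In the degenerate case $\C^\tau_i = \varnothing$ (that is, $S_i \notin \tau^{-1}(+)$), $\tau_i$ is the all-minus sign function, which is vacuously a co-signotope with no non-empty $+$-component, and hence lies in $\bar{\calS}_{0,i}(n,d)$.

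I expect the only genuine subtlety to be the structural observation of the first paragraph, and in particular the insistence that consecutive entries of a $(B,j)$-series really are adjacent in $G_{n,d}$; once that is nailed down, the ``contiguous block plus adjacency forces one component'' step and the subsequent case analysis are routine. A minor point to state cleanly is that restricting attention to one component does not create new $+$-subsets or new adjacencies, which is what makes $G_{n,d}[\tau_i]$ literally the induced subgraph on $\C^\tau_i$.
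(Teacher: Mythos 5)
Your proposal is correct and follows essentially the same route as the paper: the paper's proof consists precisely of your key structural observation, namely that the $+$-subsets in any $(\tau,B,j)$-series form a contiguous, connected block (a path in $G_{n,d}$) and hence lie in a single $+$-component, from which the conclusion follows. You merely spell out the resulting case analysis and the degenerate empty-component case in more detail than the paper does.
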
 
\begin{proof}
	For every $d$-subset $B$ of $[n]$ and every index $j$, the $+$-subsets in the $(\tau,B,j)$-series form a path in $G_{n,d}$. Hence $\tau_i$ is a co-signotope in $\bar{\calS}_{|\C^{\tau}_i|,i}(n,d)$ for $i \in [0,d]$.
\end{proof}

Note that not every tuple $(p_0, \dots, p_d)$ such that $\sum^d_{i=0} p_i = p$ is the p-sequence of a co-signotope. 
There are some necessary conditions which turn out to be sufficient as well. For this we define a \defi{sparse composition} of an integer $p$ as a $(d+1)$-tuple $(c_0, \dots, c_d)$ of non-negative integers such that $\sum^n_{i=0} c_i = p$ and either $c_i=0$ or $c_{i-1}=0$ for all $i \in [d]$.

\begin{lemma}
\label{lem:decomp_reverse}
	Let $n,d,p$ be non-negative integers such that $p \leq n-d$.
    For every $\tau \in \bar{\calS}_{p}(n,d)$ its p-sequence $(p_0, \dots, p_d)$ is a sparse composition of $p$.
    Moreover, for every sparse composition $(c_0, \dots, c_d)$ of $p$ and every co-signotopes $\tau_0, 
	\dots, \tau_d$ such that $\tau_i \in \bar{\calS}_{c_i,i}(n,d)$, there exists a unique co-signotope $\tau$ in $\bar{\calS}_{p}(n,d)$ such that its p-sequence is $(c_0, \dots, c_d)$ and $\Delta(\tau) = (\tau_0,\tau_1,\dots,\tau_d)$.
\end{lemma}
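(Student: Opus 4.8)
I would prove the two halves of the statement separately: first that every p-sequence is a sparse composition, and then the existence-and-uniqueness claim for the reverse direction.

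For the first half, fix $\tau\in\bar{\calS}_p(n,d)$ with p-sequence $(p_0,\dots,p_d)$. That $\sum_i p_i=p$ is immediate: by \cref{lem:exact_one_source} every $+$-component carries exactly one source, so the nonempty $+$-components are precisely the $\C^\tau_i$ and they partition the $p$ many $+$-subsets. For sparseness I would show that $\C^\tau_{i-1}$ and $\C^\tau_i$ cannot both be nonempty. The key observation is that the consecutive sources $S_{i-1}$ and $S_i$ differ in a single element ($S_i$ contains $i$ where $S_{i-1}$ contains $n-d+i$, and they share the remaining $d-1$ elements), so both sit in the single series $(\tau,S_i,i)$, namely as its first and last entries. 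If both were $+$-subsets, this series would have both extreme entries positive; since a co-signotope series has at most one sign change, it would then have to be flat, contradicting \cref{ob:no_flat}. This half is short.

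For the second half the candidate co-signotope is forced: if $\Delta(\tau)=(\tau_0,\dots,\tau_d)$ then by definition $\tau^{-1}(+)=\bigcup_{i=0}^d \tau_i^{-1}(+)$, which determines $\tau$ and gives uniqueness. For existence I would define $\tau$ by exactly this union (each $\tau_i^{-1}(+)$ being the single component $\C^{\tau_i}_i$, of size $c_i$) and then verify four things: the union is disjoint (so $|\tau^{-1}(+)|=p$), $\tau$ is a co-signotope, its p-sequence is $(c_0,\dots,c_d)$, and $\Delta(\tau)=(\tau_0,\dots,\tau_d)$.

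Everything hinges on one separation statement, which I expect to be the main obstacle: for distinct indices $i'<i$ with $c_{i'},c_i>0$ (so $i-i'\ge 2$ by sparseness), no $+$-subset of $\tau_i$ and no $+$-subset of $\tau_{i'}$ share as many as $d-1$ elements. I would prove this by contradiction with a counting argument on coordinate positions. Suppose $U\in\C^{\tau_i}_i$ and $V\in\C^{\tau_{i'}}_{i'}$ share $d-1$ elements. A shared element at position $\le i$ in $U$ is, by \cref{lem:one_source_index}(a), at most $c_i+(\text{its }U\text{-position})-1$, while one at position $\ge i'+1$ in $V$ is, by \cref{lem:one_source_index}(b), at least $n-c_{i'}-d+(\text{its }V\text{-position})+1$; since $U$ and $V$ differ in one element, the $U$- and $V$-positions of a shared element differ by at most one, so any element meeting both constraints forces $c_i+c_{i'}\ge n-d+1>p$, a contradiction. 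It therefore suffices to produce a shared element with $U$-position $\le i$ and $V$-position $\ge i'+1$. There are at least $i-1$ shared elements with $U$-position $\le i$ and at least $d-i'-1$ with $V$-position $\ge i'+1$; as there are only $d-1$ shared elements in total, inclusion–exclusion yields at least $(i-1)+(d-i'-1)-(d-1)=i-i'-1\ge 1$ elements satisfying both. This is precisely where sparseness ($i-i'\ge 2$) enters, and it is the delicate step.

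With the separation lemma the remainder is routine. Disjointness is immediate, since a subset common to two components would share $d$ elements; hence $|\tau^{-1}(+)|=\sum_i c_i=p$. For any series $(\tau,B,j)$ all of its $+$-subsets must come from a single $\tau_i$, because two from distinct components would share the $d-1$ elements $B\setminus\{b_j\}$; thus along the series the $\tau$-signs coincide with the $\tau_i$-signs and inherit at most one sign change (as in \cref{lem:decomp}), so $\tau$ is a co-signotope, and since $p\le n-d$ all earlier structural results apply. Finally each $\C^{\tau_i}_i$ is connected in $G_{n,d}[\tau]$ and contains $S_i$, so $\C^{\tau_i}_i\subseteq\C^\tau_i$; comparing total sizes, $\sum_i|\C^\tau_i|=p=\sum_i c_i=\sum_i|\C^{\tau_i}_i|$, forces $\C^\tau_i=\C^{\tau_i}_i$ for every $i$, which gives both the p-sequence $(c_0,\dots,c_d)$ and $\Delta(\tau)=(\tau_0,\dots,\tau_d)$. (For indices with $c_i=0$ one notes $S_i\notin\tau^{-1}(+)$, since each $\tau_{i''}$ has a unique source in its only component.)
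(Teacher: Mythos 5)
Your proof is correct, and while the first half (sparseness of the p-sequence) coincides with the paper's argument, your treatment of the reverse direction takes a genuinely different route at the one technical step that matters. The paper shows that the union $\tau$ is a co-signotope by contradiction: it assumes some $(\tau,B,j)$-series has two sign changes, extracts a left-aligned $(\tau_i,B,j)$-series and a right-aligned $(\tau_{i'},B,j)$-series, and then bounds the sum of the distances from $B^{\alpha}$ to $S_i$ and from $B^{\omega}$ to $S_{i'}$ via \cref{lem:dist_to_source}, using a four-case analysis of how the intervals $[b^{\alpha}_i,b^{\alpha}_{i+1}]$ and $[b^{\omega}_{i'},b^{\omega}_{i'+1}]$ sit inside $[n]$. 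You instead prove a clean separation lemma --- no $+$-subset of $\tau_i$ and no $+$-subset of $\tau_{i'}$ share $d-1$ elements when $i-i'\ge 2$ --- directly from the coordinate bounds of \cref{lem:one_source_index} together with the observation that a shared element's positions in the two subsets differ by at most one and an inclusion--exclusion count that produces a shared element constrained from both sides, forcing $c_i+c_{i'}\ge n-d+1>p$. Everything else (disjointness of the $\tau_i^{-1}(+)$, the co-signotope property of $\tau$, recovery of the p-sequence and of $\Delta(\tau)$ by comparing $\sum_i|\C^{\tau}_i|$ with $\sum_i|\C^{\tau_i}_i|$, and uniqueness) then falls out of this single statement. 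Your separation lemma is strictly stronger than what the paper's case analysis establishes (it rules out any two $+$-subsets of distinct components lying in a common series, regardless of alignment), it avoids the interval case distinction entirely, and it yields the contradiction with a cleaner margin; the paper's route has the advantage of reusing \cref{lem:dist_to_source} rather than introducing a new combinatorial claim. One cosmetic remark: your separation argument is phrased for $U\neq V$ (sharing exactly $d-1$ elements); the degenerate case $U=V$, which you invoke for disjointness, goes through verbatim with position difference zero, but it is worth saying so explicitly.
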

\begin{proof}	
    To show the first part, note that by \cref{lem:exact_one_source}, we obtain $\sum^d_{i=0} p_i = p$.
	We argue that for all $i \in [p]$, either $p_i =0$ or $p_{i-1}=0$.
	Indeed, if this is not the case, 
    $S_i$ and $S_{i-1}$ are both $+$-subsets by definition. 
    Then the $(\tau,S_i,i)$-series starts with a $+$-subset $S_i$ and ends with a $+$-subset $S_{i-1}$.
	This implies that all subsets in $(\tau,B,i)$-series are $+$-subsets, and hence this series is flat, a contradiction to \cref{ob:no_flat}.
	Therefore, $(p_0, \dots, p_d)$ is a sparse composition of $p$.

	Now let $\tau$ be the sign function over $\binnd$ such that $\tau^{-1}(+) = \bigcup^d_{i=0} \tau_i^{-1}(+)$.
    By construction, $\tau$ has at most $p$ $+$-subsets.

	We start by arguing that $\tau$ is a co-signotope.
	Suppose that this is not the case.
	Then for some $d$-subset $B = (b_1, \dots, b_d)$ and an index $j$, the $(\tau,B,j)$-series has more than one sign change.
	By definition of $\tau$ and by the fact that $\tau_i$ is a co-signotope in $\bar{\calS}_{c_i,i}(n,d)$ for $i \in [0,d]$, we must have that for some $i, i' \in [0,d]$, the $(\tau_i,B,j)$-series is left-aligned and $(\tau_{i'},B,j)$-series is right-aligned.
	Because $(c_0, \dots, c_d)$ is a sparse composition, we have $|i-i'| \geq 2$.
	Let $B^{\alpha} = (b^{\alpha}_1, \dots, b^{\alpha}_d)$ and $B^{\omega} = (b^{\omega}_1, \dots, b^{\omega}_d)$ be the first and the last subsets of the $(B,j)$-series, respectively.
	We use the convention that $b_k = 0$ for $k < 1$ and $b_k = d+1$ for $k > d$.
	It is easy to see the following: 
    \begin{enumerate}[label=(\alph*)]
        \item If $j \geq i$, then $[b^{\alpha}_i, b^{\alpha}_{i+1}] \subseteq [b_{i-1},b_{i+1}]$;
        \item If $j < i$, then $[b^{\alpha}_i, b^{\alpha}_{i+1}] \subseteq [b_{i},b_{i+1}]$;
        \item If $j \leq i'+1$, then $[b^{\omega}_{i'}, b^{\omega}_{i'+1}] \subseteq [b_{i'},b_{i'+2}]$;
        \item If $j > i'+1$, then $[b^{\omega}_{i'}, b^{\omega}_{i'+1}] \subseteq [b_{i'},b_{i'+1}]$.
    \end{enumerate}

    From the above case distinction, we observe that when $j > i'+1$, the interiors of $[b^{\alpha}_i, b^{\alpha}_{i+1}]$ and $[b^{\omega}_{i'}, b^{\omega}_{i'+1}]$ are disjoint, since $|i'-i| \geq 2$.
    When $j \leq i' + 1$ and $i' \geq i + 2$, these interiors are also disjoint.
    When $j \leq i' + 1$ and $i' \leq i - 2$, then (b) and (c) are applicable, and the two interiors are also disjoint.

    Hence, we always have that the two interiors above are disjoint.
    Further, from (a)--(d) above, these interiors contain only integers from $[n] \setminus B \cup \{b_i, b_{i'+1}\}$.
    These two statements imply that $b^{\alpha}_{i+1} - b^{\alpha}_i + b^{\omega}_{i'+1} - b^{\omega}_{i'} \leq n-d + 4$.
    
	By \cref{lem:dist_to_source}, the shortest path between $S_i$ and $B^{\alpha}$ is $b^{\alpha}_i - b^{\alpha}_{i+1} + n -d + 1$, whereas the shortest path between $S_{i'}$ and $B^{\omega}$ is $b^{\omega}_{i'} - b^{\omega}_{i'+1} + n -d + 1$.
	Then the sum of the distances is 
	\[
	    b^{\alpha}_i - b^{\alpha}_{i+1} + n -d + 1 + b^{\omega}_{i'} - b^{\omega}_{i'+1} + n -d + 1 \geq
        2(n-d+1) - (n-d+4) \geq n-d-2 \geq p-2.
	\]
    This implies that $c_i + c_{i'} \geq p+1$, a contradiction to the fact that $(c_0, \dots, c_d)$ is a sparse composition of $p$.
	
	Next, by \cref{lem:exact_one_source}, each $+$-component of $\tau$ contains exactly one source subset.
    Hence, no subset is a $+$-subset of more than one co-signotope among $\tau_0, \dots, \tau_d$.
    Therefore, $\tau$ has exactly $p$ $+$-subsets, and each $+$-component of $\tau$ that contains $S_i$ corresponds exactly to the $+$-component of $\tau_i$.
    This also implies that $\tau$ is unique and $\Delta(\tau) = (\tau_0,\tau_1,\dots,\tau_d)$.
    It follows that $(c_0, \dots, c_d)$ is the p-sequence of $\tau$, completing the proof of the lemma.	
\end{proof}

\section{Proof of Theorem \ref{thm:bijection}}
\label{sec:main_proof}

Equipped with \cref{cor:one_source_bijection}, \cref{lem:decomp}, and \cref{lem:decomp_reverse}, we are now ready to prove \cref{thm:bijection}.

\begingroup
\def\thetheorem{\ref{thm:bijection}}
\begin{theorem}[Restated from \cref{subsec:co}]
    Let $n, \tilde{n}, d, p$ be integers such that $0\leq p \leq \min\{n,\tn\}-d$.
    Then there exists a bijection between $\bar{\calS}_{\leq p}(n,d)$ and $\bar{\calS}_{\leq p}(\tn,d)$ that preserves the relation~$\preceq_{\bar{S}}$.
\end{theorem}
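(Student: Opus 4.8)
The plan is to assemble the required bijection out of the single-component bijection $h_{n,\tn,d,\cdot,i}$ from \cref{cor:one_source_bijection} by transporting each $+$-component separately, using the decomposition machinery of \cref{lem:decomp,lem:decomp_reverse}. Concretely, for $\tau \in \bar{\calS}_{\leq p}(n,d)$ let $(c_0,\dots,c_d)$ be its p-sequence and $\Delta(\tau)=(\tau_0,\dots,\tau_d)$ its $+$-component decomposition, so that $\tau_i \in \bar{\calS}_{c_i,i}(n,d)$ by \cref{lem:decomp}. I would then set $\tilde\tau_i := h_{n,\tn,d,c_i,i}(\tau_i) \in \bar{\calS}_{c_i,i}(\tn,d)$ for each $i \in [0,d]$ and define $\Phi(\tau)$ to be the unique co-signotope on $\tn$ elements with $\Delta(\Phi(\tau))=(\tilde\tau_0,\dots,\tilde\tau_d)$ provided by \cref{lem:decomp_reverse}. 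This is legitimate because $\sum_i c_i = |\tau^{-1}(+)| \le p \le \tn-d$ and because $(c_0,\dots,c_d)$ is a sparse composition of this sum --- a notion depending only on $p$ and $d$ and therefore identical on the $n$- and $\tn$-sides. Note that $\Phi$ preserves the number of $+$-subsets, so it restricts to a bijection on each level $\bar{\calS}_{p'}$ for $p'\le p$.

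For bijectivity I would argue that each of the three stages is itself bijective. By \cref{lem:decomp_reverse}, the map $\tau \mapsto \big((c_i)_i,(\tau_i)_i\big)$ is a bijection from $\bar{\calS}_{p'}(n,d)$ onto the set of pairs consisting of a sparse composition $(c_i)$ of $p'$ and a tuple $(\tau_i)\in\prod_{i}\bar{\calS}_{c_i,i}(n,d)$. The middle stage applies the bijection $h_{n,\tn,d,c_i,i}$ in each coordinate, and the final stage is the inverse of the decomposition on the $\tn$-side. Since the indexing set of sparse compositions is the same for $n$ and $\tn$, the composite $\Phi$ is a bijection, and its inverse is obtained by running the same construction with the roles of $n$ and $\tn$ exchanged, using $h_{\tn,n,d,c_i,i} = h_{n,\tn,d,c_i,i}^{-1}$ (both are induced by the mutually inverse relabelings $g^{-1}_{\tn,d,i}\circ g_{n,d,i}$ and $g^{-1}_{n,d,i}\circ g_{\tn,d,i}$).

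It remains to check that $\Phi$ preserves $\preceq_{\bar{S}}$. Since this order is the transitive closure of the single step, it suffices to show that $\Phi$ and $\Phi^{-1}$ both send single steps to single steps. Suppose $\tau\to\tau'$ is a single step adding the $+$-subset $B$. By \cref{lem:which_component}, $B$ lies in a single $+$-component $\C^{\tau'}_i$, so $\Delta(\tau)$ and $\Delta(\tau')$ coincide in every coordinate except the $i$-th, where $\tau'_i$ is obtained from $\tau_i$ by adding the single $+$-subset $B$ (either enlarging $\C^\tau_i$, or, when $c_i=0$, creating it as the singleton $\{S_i\}$). As $h_{n,\tn,d,\cdot,i}$ acts simply by the fixed subset relabeling $g^{-1}_{\tn,d,i}\circ g_{n,d,i}$ on the set of $+$-subsets, it commutes with adding one subset, so $\tilde\tau'_i$ arises from $\tilde\tau_i$ by a single step while the other coordinates are unchanged; by the uniqueness in \cref{lem:decomp_reverse}, $\Phi(\tau')$ is then a single step above $\Phi(\tau)$. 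The symmetric argument for $\Phi^{-1}$ then yields the equivalence of single steps, hence of $\preceq_{\bar{S}}$.

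The main obstacle is this order-preservation step, specifically the claim that a single step alters exactly one coordinate of the decomposition without merging components or breaking sparsity. This is precisely what \cref{lem:which_component} controls: it assigns $B$ to a unique component, so no two previously separate components can coalesce (a merge would produce a component with two source subsets, contradicting \cref{lem:exact_one_source} for $\tau'\in\bar{\calS}_{\le p}$), and since the p-sequence of any element of $\bar{\calS}_{\le p}$ is forced to be a sparse composition (\cref{lem:decomp_reverse}), the enlarged or newly created component automatically has both neighbors empty. The only remaining point --- the mutual consistency of the maps $h_{n,\tn,d,c_i,i}$ across the possible component sizes $c_i$ --- is immediate, as they are all induced by the single relabeling $g^{-1}_{\tn,d,i}\circ g_{n,d,i}$.
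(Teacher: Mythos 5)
Your proposal is correct and follows essentially the same route as the paper: decompose into $+$-components via the p-sequence, transport each component with $h_{n,\tn,d,c_i,i}$, recombine via \cref{lem:decomp_reverse}, and reduce order-preservation to single steps. If anything, your use of \cref{lem:which_component} and \cref{lem:exact_one_source} to rule out component merging under a single step makes that part of the argument slightly more explicit than the paper's.
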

\addtocounter{theorem}{-1}
\endgroup

\begin{proof}[Proof of \cref{thm:bijection}]
    Consider the following mapping $\zeta_{n,\tn,d,p}$ from a co-signotope $\tau \in \bar{\calS}_{\leq p}(n,d)$ to a co-signotope $\tilde{\tau} \in \bar{\calS}_{\leq p}(\tn,d)$.
    Let $(p_0, \dots, p_d)$ be the p-sequence of $\tau$.
    We first obtain the $+$-component decomposition $(\tau_0,\tau_1,\dots,\tau_d)$ of $\tau$.
    Next, we use the bijection $h_{n,\tn,d,p_i,i}$ in \cref{cor:one_source_bijection} to map each of $\tau_i \in \bar{\calS}_{p_i,i}(n,d)$ to $\tilde{\tau}_i \in \bar{\calS}_{p_i,i}(\tn,d)$.
    Then from the tuple $(\tilde{\tau}_0,\tilde{\tau}_1,\dots,\tilde{\tau}_d)$, we can combine their $+$-subsets to construct $\tilde{\tau}$.
    By \cref{lem:decomp_reverse}, $\tilde{\tau} \in \bar{\calS}_{p}(\tn,d)$. 

    By \cref{cor:one_source_bijection}, \cref{lem:decomp}, and~\cref{lem:decomp_reverse}, the mapping $\zeta_{n,\tn,d,p}$ is injective.
    Also by these corollary and lemmas, the mapping $\zeta_{\tn,n,d,p}$ maps $\tilde{\tau}$ back to $\tau$ and generally this mapping maps a signotope in $\bar{S}_p(\tilde{n},d)$ to $\bar{S}_p(n,d)$.
    This implies that $\zeta_{n,\tn,d,p}$ is a bijection between $\bar{S}_p(n,d)$ and $\bar{S}_p(\tilde{n},d)$.

    It remains to prove that $\zeta_{n,\tn,d,p}$ preserves the relation~$\preceq_{\bar{S}}$.
    For this, it is sufficient to prove that $\zeta_{n,\tn,d,p}$ preserves the single step relation.
    Let $\tau$ and $\rho$ be two $d$-co-signotopes in $\bar{\calS}_{\leq p}(n,d)$ such that $\tau^{-1}(+) \subset \rho^{-1}(+)$ and $|\rho^{-1}(+)| = |\tau^{-1}(+)| + 1 = k+1$ for some $k$.
    Let $(\tau_0, \dots, \tau_d) = \Delta(\tau)$ and $(\rho_0, \dots, \rho_d) = \Delta(\rho)$.
    This implies that $\Delta(\tau)$ and $\Delta(\rho)$ agree in all coordinates, except for some $i$-th coordinate, where $\tau_i$ and $\rho_i$ differ by a single step.
    Let $\tilde{\tau}$ and $\tilde{\rho}$ be the images of $\tau$ and $\rho$, respectively, in the bijection above.
    By \cref{cor:one_source_bijection}, it follows that $\Delta(\tilde{\tau})$ and $\Delta(\tilde{\rho})$ also agree in all coordinates, except for some $i$-th coordinate, where the two corresponding co-signotopes differ by a single step.
    This in turns implies that $\tilde{\tau}$ and $\tilde{\rho}$ differ by a single step.
    This completes the proof of the theorem.
\end{proof}

\paragraph*{A simple bijection.}
Although the mapping $\zeta_{n,\tn,d,p}$ in the proof above satisfies the conditions of \cref{thm:bijection}, its description is rather complicated.
We now provide a simpler description of the bijection.
In other words, we will prove that the mapping $\zeta_{n,\tn,d,p}$ is identical to the mapping $\phi_{n,\tn,d,p}$ as described below.

For a co-signotope $\tau \in \bar{\calS}_p(n,d)$, a $d$-subset $B = (b_1, \dots, b_d)$, we define $\gamma_{\tau, \tn}(B)$ as the tuple obtained from $B$ by replacing $b_j$ by $b_j + \tn -n$ if the $(\tau,B,j)$-series is right-aligned.

\begin{definition}
    Let $n, \tilde{n}, d,p$ be positive integers.
    Let $\phi_{n,\tn,d,p}$ map a co-signotope $\tau \in \bar{\calS}_{\leq p}(n,d)$ to a sign function $\tilde{\tau}: {\tn \choose d} \to \{+,-\}$ such that
    \[
        \tilde{\tau}^{-1}(+) = \Big\{\gamma_{\tau, \tn}(B) \mid B \in \tau^{-1}(+)\Big\}.
    \]
\end{definition}

\begin{lemma}
\label{lem:bijection}
    $\phi_{n,\tn,d,p}$ is a bijection as guaranteed by \cref{thm:bijection}.
\end{lemma}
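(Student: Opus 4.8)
The plan is to show that $\phi_{n,\tn,d,p}$ coincides with the bijection $\zeta_{n,\tn,d,p}$ constructed in the proof of \cref{thm:bijection}; since $\zeta_{n,\tn,d,p}$ is already known to be a bijection preserving $\preceq_{\bar{\calS}}$, this immediately yields the lemma. Both maps are determined by their action on $+$-subsets (in each case the image's $+$-subsets are exactly the images of the domain's $+$-subsets), so it suffices to check that $\gamma_{\tau,\tn}(B) = h_{n,\tn,d,p_i,i}(B)$ for every $+$-subset $B$ of $\tau$, where $i$ is the index with $B \in \C^{\tau}_i$.

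First I would unwind the definition of $h_{n,\tn,d,p_i,i}$ on a single $+$-subset: it sends $B$ to $g^{-1}_{\tn,d,i}(g_{n,d,i}(B))$. Writing $B = (b_1, \dots, b_d)$ and computing coordinate-wise with the explicit formula for $f_{n,d,i,j}$, I would verify the clean dichotomy: for each index $j \in [i]$ we get $f^{-1}_{\tn,d,i,j}(f_{n,d,i,j}(b_j)) = b_j$, because the map $x \mapsto x - j + 1$ does not involve $n$; while for each index $j \in [i+1,d]$ we get $f^{-1}_{\tn,d,i,j}(f_{n,d,i,j}(b_j)) = b_j + \tn - n$, since the two occurrences of $n$ cancel and leave exactly the shift $\tn - n$. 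Hence $h_{n,\tn,d,p_i,i}$ leaves the first $i$ coordinates of $B$ fixed and shifts the last $d-i$ coordinates by $\tn - n$.

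Next I would match this dichotomy to the rule defining $\gamma_{\tau,\tn}$. By \cref{lem:one_source_index}, for a $+$-subset $B$ in $\C^{\tau}_i$ the $(\tau,B,j)$-series is left-aligned for $j \in [i]$ and right-aligned for $j \in [i+1,d]$; conversely \cref{lem:which_component} identifies $i$ as exactly the largest index for which the series is left-aligned. Therefore the condition ``$j \in [i+1,d]$'' is equivalent to ``the $(\tau,B,j)$-series is right-aligned'', so $\gamma_{\tau,\tn}$ shifts precisely the coordinates $j > i$ by $\tn - n$ and fixes the remaining coordinates — exactly matching the computation above. This gives $\gamma_{\tau,\tn}(B) = h_{n,\tn,d,p_i,i}(B)$ for every $+$-subset $B$, and hence $\phi_{n,\tn,d,p} = \zeta_{n,\tn,d,p}$ as maps, transferring all the required properties.

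I expect the only delicate point to be the bookkeeping that the alignment of the $(\tau,B,j)$-series, which in the definition of $\gamma_{\tau,\tn}$ is computed in the full co-signotope $\tau$, is the correct notion to invoke $\cref{lem:one_source_index}$ against; this is where I would lean on \cref{lem:which_component} to pin down the component index $i$ purely from $\tau$ and $B$, ensuring that $\gamma_{\tau,\tn}$ applies the same shift pattern that $h$ applies within the decomposition piece $\tau_i$. Everything else reduces to a routine substitution into the formula for $f_{n,d,i,j}$ and its $\tn$-counterpart.
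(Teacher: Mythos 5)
Your proposal is correct and follows essentially the same route as the paper: identify $\phi_{n,\tn,d,p}$ with $\zeta_{n,\tn,d,p}$ by computing $g^{-1}_{\tn,d,i}\circ g_{n,d,i}$ coordinate-wise and matching the fixed/shifted dichotomy to the alignment criterion of \cref{lem:one_source_index}. Your explicit appeal to \cref{lem:which_component} to pin down the component index $i$ from $\tau$ and $B$ alone is a slightly more careful articulation of a step the paper leaves implicit, but the argument is the same.
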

\begin{proof}
    Consider the mapping $\zeta_{n,\tn,d,p}$ as described in the proof of \cref{thm:bijection}.
    We prove that it is identical to $\phi_{n,\tn,d,p}$.
    Note that the mapping $\zeta_{n,\tn,d,p}$ essentially maps each $+$-subset of $\tau$ to a $+$-subset of $\tilde{\tau}$.
    In the step of computing the $+$-component decomposition of $\tau$ and the step of combining $(\tilde{\tau}_0,\tilde{\tau}_1,\dots,\tilde{\tau}_d)$ into $\tilde{\tau}$, the $+$-subsets do not change, or rather, each $+$-subset is mapped to itself.
    In the middle step when we apply $h_{n,\tn,d,s_i,i}$, the mapping of the $+$-subsets is done through $g^{-1}_{\tn,d,i}\circ g_{n,d,i}$, by the definition of $h_{n,\tn,d,i}$ in \cref{cor:one_source_bijection}.
    By \cref{def:g}, we can describe $g^{-1}_{\tn,d,i}\circ g_{n,d,i}$ as follows:
    For a $d$-subset $B=(b_1, \dots, b_d)$ in the same $+$-component of $\tau$ as $S_i$, $g^{-1}_{\tn,d,i}( g_{n,d,i}(B))$ is obtained by replacing $b_j$ with $b_j$ for $j \in [i]$ and with $b_j+\tn-n $ for $j \in [i+1,d]$.
    Combining this description with \cref{lem:one_source_index}, we obtain that $g^{-1}_{\tn,d,i}( g_{n,d,i}(B)) = \gamma_{\tau,\tn}(B)$.
    It follows that $\zeta_{n,\tn,d,p} \equiv \phi_{n,\tn,d,p}$, as claimed.
\end{proof}

\section{Plus count}
\label{sec:count}

\cref{thm:bijection} implies that given $d$ and $p$, $\bar{S}_p(n,d)$ is a fixed number for all $n \geq p + d$.
This motivates the following definition.

\begin{definition}
    For two integers $d \geq 1$ and $p \geq 0$, the \defi{plus count} with respect to $d$ and $p$, denoted by $P_{d,p}$, is the size of $\bar{S}_p(n,d)$ for any $n \geq p + d$.
\end{definition}

From computer experiment, we obtain the values of $P_{d,p}$ for small $d$ and $p$, as presented in \cref{tab:plus_count}.

\begin{table}
    \centering
    \begin{tabular}{l | r r r r r r r r r r r}
        $d$ / $p$ & 0 & 1 & 2 & 3 & 4 & 5 & 6 & 7 & 8 & 9 & 10 \\
        \hline
        1 & 1 & 2 & 2 & 2 & 2 & 2 & 2 & 2 & 2 & 2 & 2 %& 2 %&2 
        \\
        2 & 1 & 3 & 5 & 9 & 14 & 21 & 33 & 47 & 68 & 96 & 135 %& 184 %& 253 & 339 & 454 & 602
        \\
        3 & 1 & 4 & 9 & 20 & 41 & 78 & 146 & 264 & 465 & 804 & 1368 %& 2286
        \\
        4 & 1 & 5 & 14 & 36 & 86 & 192 & 413  & 857 & 1732 & 3422 & 6633 \\
        5 & 1 & 6 & 20 & 58 & 155 & 386 & 920 & 2110 & 4691 & 10176 & 21604 \\
        6 & 1 & 7 & 27 & 87 & 255 & 693 & 1790 & 4438 & 10636 & 24799 & 56485\\
        7 & 1 & 8 & 35 & 124 & 394 & 1154 & 3192 & 8444 & 21534 & 53292 & 128571 \\
    \end{tabular}
    \caption{The values of $P_{d,p}$ for small $d$ (vertical) and $p$ (horizontal)}
    \label{tab:plus_count}
\end{table}

We can provide a characterization of the plus counts based on the correspondences in Lemmas~\ref{lem:one_source_char}, \ref{lem:decomp}, and \ref{lem:decomp_reverse}.
Let $F_{d,i}(p)$ be the number of $(d,i)$-Ferrers diagrams with respect to $p$, and let $\D_{d+1,p}$ be the set of all sparse compositions of $p$.
Then the lemmas above give the following formula
\begin{equation}
\label{eq:plus_count}
    P_{d,p} = \sum_{(c_0,\dots,c_d) \in \D_{d+1,p}} \, \prod^d_{i=0} F_{d,i}(c_i).
\end{equation}

For the remainder of this section, we discuss a few values of $d$ and $p$.
% Due to space constraints, all the proofs in this section are presented in the full version~\cite{}.
We start with a few simple observations.

\begin{restatable}{lemma}{countone}
    It holds: 
    \begin{enumerate}[label=(\alph*)]
        \item \label{item:OneMinSign}
        For all $d \geq 1$: $P_{d,0} = 1$.
        \item \label{item:d=1}
        For all $p>0$: $P_{1,p} =2$/
        \item \label{item:p=1}
        For all $d \geq 1$: $P_{d,1} = d+1$.
    \end{enumerate}
\end{restatable}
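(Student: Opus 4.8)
The plan is to prove each of the three parts directly from the formula~\eqref{eq:plus_count}, namely $P_{d,p} = \sum_{(c_0,\dots,c_d) \in \D_{d+1,p}} \prod^d_{i=0} F_{d,i}(c_i)$, together with the elementary fact that $F_{d,i}(0) = 1$ for every $d,i$ (the empty $(d,i)$-Ferrers diagram is the unique one with respect to $0$). In each case I will pin down exactly which sparse compositions contribute and count the relevant Ferrers diagrams, so the whole argument reduces to a short combinatorial bookkeeping rather than any structural work about co-signotopes.

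For part~\ref{item:OneMinSign}, when $p=0$ the only sparse composition is the all-zero tuple $(0,\dots,0)$, and its contribution is $\prod_{i=0}^d F_{d,i}(0) = 1$; hence $P_{d,0}=1$. For part~\ref{item:p=1}, when $p=1$ the sparse compositions of $1$ are exactly the $d+1$ unit tuples $e_i$ (a single $1$ in coordinate $i \in [0,d]$, the rest $0$), since any composition with a single nonzero part trivially satisfies the sparseness condition. Each such tuple contributes $F_{d,i}(1)\prod_{j\neq i}F_{d,j}(0) = F_{d,i}(1)$, and $F_{d,i}(1)=1$ because the unique $(d,i)$-Ferrers diagram with respect to $1$ is $\{(1,\dots,1)\}$. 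Summing over the $d+1$ choices of $i$ gives $P_{d,1}=d+1$.

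For part~\ref{item:d=1}, I would specialize to $d=1$, where \eqref{eq:plus_count} runs over sparse compositions $(c_0,c_1)$ of $p$ with $c_0=0$ or $c_1=0$; for $p>0$ these are precisely $(p,0)$ and $(0,p)$. The contribution of each is a single factor $F_{1,0}(p)$ or $F_{1,1}(p)$, so I must check that $F_{1,i}(p)=1$ for $i\in\{0,1\}$ and every $p>0$. A $(1,i)$-Ferrers diagram with respect to $p$ is a set of $p$ points in $\Z_{>0}$ (the conditions in \cref{def:gen_ferrers} are vacuous when $d=1$) closed downward under the single coordinate, which forces it to be $\{1,2,\dots,p\}$ uniquely; hence $F_{1,0}(p)=F_{1,1}(p)=1$ and $P_{1,p}=1+1=2$.

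I do not expect a serious obstacle, since all three parts are elementary consequences of \eqref{eq:plus_count} once the base values $F_{d,i}(0)=1$ and $F_{d,i}(1)=1$ are observed; the only point requiring a little care is verifying that the sparseness constraint does not eliminate any of the single-nonzero-part compositions in parts~\ref{item:p=1} and~\ref{item:d=1}, which is immediate because a composition with exactly one nonzero coordinate can never have two adjacent nonzero coordinates. (I also note the apparent typo ``$P_{1,p}=2/$'' in the statement, which I read as $P_{1,p}=2$.)
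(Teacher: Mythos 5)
Your proof is correct, but it takes a genuinely different route from the paper's. The paper argues each part directly from the structure of co-signotopes: part~\ref{item:OneMinSign} by noting the all-$-$ function is the unique co-signotope with no $+$-subsets, part~\ref{item:d=1} by observing that for $d=1$ the at-most-one-sign-change condition forces the $+$-entries to form a prefix or a suffix of $(1,\dots,n)$, and part~\ref{item:p=1} by invoking that a lone $+$-subset must be one of the $d+1$ source subsets. You instead derive all three uniformly from the counting formula~(\ref{eq:plus_count}), enumerating the sparse compositions (all-zero for $p=0$; the $d+1$ unit tuples for $p=1$; $(p,0)$ and $(0,p)$ for $d=1$) and supplying the base values $F_{d,i}(0)=F_{d,i}(1)=1$ and $F_{1,i}(p)=1$, which you justify correctly and which the paper only records in the subsequent lemma on $F_{d,i}$ --- so there is no circularity, merely a reordering. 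Your approach buys uniformity and matches how the paper itself later computes $P_{d,2}$ and $P_{d,3}$; the paper's direct arguments buy independence from the Ferrers-diagram machinery (and, for part~\ref{item:OneMinSign}, sidestep the fact that the correspondence underlying~(\ref{eq:plus_count}) is stated for $p\geq 1$, a boundary case your argument handles only because the empty diagram and the all-zero composition behave as expected). Both proofs are sound.
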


\begin{proof}
    Firstly, \ref{item:OneMinSign} holds since for every choice of $n$ and $r$ there is exactly one co-signotope with no $+$-subsets, the unique minimal co-signotope with only $-$-subsets. 
    Secondly, when $d=1$, the definition of a co-signotope implies that removing any element from the sequence $(1, \dots, n)$ results in a sequence with at most one sign change.
    This can only be achieved in two ways: either $\sigma(i) = +$ for all $i \in [p]$ or $\sigma(i) = +$ for all $i \in [n-p+1,n]$.
    Hence, \ref{item:d=1} follows.
    Lastly, \ref{item:p=1} follows from the fact that there are $d+1$ source subsets as discussed in \cref{sec:plus_subsets}.
\end{proof}

Next, to prove the formulae for higher values of $p$, we show a few observations on $F_{d,i}$.

\begin{restatable}{lemma}{counttwo}
\label{lem:f_obs}
    For non-negative integers $d,i,p$ with $d \geq i$, it holds:
    \begin{enumerate}[label=(\alph*)]
        \item \label{item:f_sym} $F_{d,i}(p) = F_{d,d-i}(p)$.
        \item \label{item:f_decomp} 
            $F_{d,i}(p) = \sum^{p}_{c=1}F_{i,0}(c)F_{d-i,0}(p+1-c)$.
        \item For $d = 1$, $F_{d,i}(p) = 1$.
        \item \label{item:f_small}
            For $d > 1$, $F_{d,i}(0) = 1$, $F_{d,i}(1) = 1$, $F_{d,0}(2) = 1$, and $F_{d,0}(3) = 2$.
        \item
            For $0 < i < d$, $F_{d,i}(2) = 2$.
        \item 
            $F_{d,1}(3) = F_{d,d-1}(3) = 4$. For $1 < i < d-1$, $F_{d,i}(3) = 5$.
    \end{enumerate}
\end{restatable}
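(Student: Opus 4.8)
The plan is to establish the six parts of \cref{lem:f_obs} largely through direct bijective and recursive arguments on the combinatorial structure of $(d,i)$-Ferrers diagrams, exploiting the defining constraints in \cref{def:gen_ferrers}. For part \ref{item:f_sym}, I would construct an explicit bijection between $(d,i)$-Ferrers diagrams and $(d,d-i)$-Ferrers diagrams by reversing the coordinate order, i.e., mapping a point $(a_1, \dots, a_d)$ to $(a_d, \dots, a_1)$. The key check is that this reversal swaps the two monotonicity regimes in the definition of $Z(d,i)$ (the increasing block on indices $[2,i]$ and the decreasing block on $[i+1,d-1]$) exactly into those of $Z(d,d-i)$, and that the downward-closure condition defining a Ferrers diagram is preserved since the partial order on points is symmetric under coordinate reversal. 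This is the cleanest part and mostly amounts to verifying the index arithmetic carefully.

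For part \ref{item:f_decomp}, the plan is to split a $(d,i)$-Ferrers diagram at the ``valley'' between the increasing prefix and the decreasing suffix, around index $i$ and $i+1$. The idea is that a point in $Z(d,i)$ consists of a prefix $(a_1, \dots, a_i)$ lying in the increasing regime (which, after reindexing, looks like a point of $Z(i,0)$ or its reflection) and a suffix $(a_{i+1}, \dots, a_d)$ lying in the decreasing regime (looking like a point of $Z(d-i,0)$). I would formalize this by showing that the whole diagram is determined by how its $p$ points distribute across the values taken at the two central coordinates $a_i$ and $a_{i+1}$. The index shift $p + 1 - c$ in the sum reflects the fact that the central column is shared or double-counted by one; concretely, if one ``half'' of the diagram has $c$ cells then the other half is forced to have $p + 1 - c$ cells because the overlap at the joint contributes the extra unit. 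The main obstacle in this part will be correctly tracking this off-by-one bookkeeping and justifying that the two halves can be chosen independently — I would argue independence by showing the downward-closure conditions decouple across the valley.

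For the remaining parts (c)--(f), these are small-parameter computations that I would derive as corollaries of \ref{item:f_decomp} together with base cases for $F_{d,0}$. First I would pin down $F_{d,0}(p)$ for small $p$: since $Z(d,0)$ is the purely decreasing regime $a_1 \geq a_2 \geq \dots \geq a_d$, the $(d,0)$-Ferrers diagrams with respect to $p$ are essentially integer partitions of $p$ into at most $d$ parts with an added dimensional constraint, giving $F_{d,0}(0) = F_{d,0}(1) = F_{d,0}(2) = 1$ and $F_{d,0}(3) = 2$ for $d > 1$ by direct enumeration of the few admissible shapes. With these base values in hand, parts (d), (e), and (f) follow by plugging into the convolution \ref{item:f_decomp}: for instance $F_{d,i}(2) = \sum_{c=1}^{2} F_{i,0}(c) F_{d-i,0}(3-c)$, and substituting the known small values yields $2$ when $0 < i < d$; similarly $F_{d,1}(3)$ and the generic $F_{d,i}(3)$ for $1 < i < d-1$ are read off by summing three or four terms. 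The one subtlety to watch is that part \ref{item:f_decomp} requires $1 \leq i \leq d-1$ for the split to make sense, so the boundary indices $i = 0$ and $i = d$ must be handled either directly (via the $F_{d,0}$ computation above) or through the symmetry in \ref{item:f_sym}; I expect this boundary handling, rather than any deep idea, to be the main place where care is needed.
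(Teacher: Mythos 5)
Your plan for parts \ref{item:f_sym}, (c), and \ref{item:f_small} matches the paper's (very terse) proof: coordinate reversal for the symmetry, and direct enumeration of the small downward-closed sets in $Z(d,0)$; those parts are fine. The genuine problem is part \ref{item:f_decomp}, and it is exactly the step you flag as needing care. Your argument --- that a $(d,i)$-Ferrers diagram splits into a ``prefix half'' of size $c$ and a ``suffix half'' of size $p+1-c$ overlapping only in the cell $(1,\dots,1)$ --- presupposes that every cell of the diagram has either its first $i$ coordinates or its last $d-i$ coordinates all equal to $1$. That is false once $p \geq 4$: for $1 \leq i \leq d-1$ the cell $\mathbf{1}+e_i+e_{i+1}$ lies in $Z(d,i)$, has both halves nontrivial, and its downward closure in $Z(d,i)$ consists of exactly four cells. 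Concretely, for $d=2$, $i=1$, $p=4$ the set $\{(1,1),(2,1),(1,2),(2,2)\}$ is a valid $(2,1)$-Ferrers diagram that is not of this ``hook'' form, so $F_{2,1}(4)=\pi(4)=5$, while the right-hand side of \ref{item:f_decomp} evaluates to $\sum_{c=1}^{4}F_{1,0}(c)F_{1,0}(5-c)=4$. (This also conflicts with the identity $F_{2,1}(p)=\pi(p)$ used later in the paper.) So the ``decoupling across the valley'' you intend to establish cannot be established in general: $Z(d,i)$ is a product of two chains-of-constraints, but a downward-closed subset of a product poset is not determined by two downward-closed subsets glued at the minimum.

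To be fair, the paper's own one-line justification of \ref{item:f_decomp} (``the first $i$ coordinates and the last $d-i$ coordinates are independent from each other'') has the same defect, and the identity is only ever invoked for $p \leq 3$. In that range your argument does go through: any cell lying off both arms forces at least four cells into the diagram, so for $p\leq 3$ every diagram really is a union of a prefix diagram and a suffix diagram sharing only $(1,\dots,1)$, and the convolution with the $+1$ offset is correct; parts (e) and (f) then follow by substituting the values from \ref{item:f_small} exactly as you describe. You should therefore either prove \ref{item:f_decomp} only for $p\leq 3$ (which suffices for everything downstream, including the $P_{d,3}$ computation) or replace it with a correct identity; as written, the ``independence'' step of your plan would fail for general $p$. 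Your remark about handling the boundary indices $i=0$ and $i=d$ separately is well taken, but it is a minor issue compared with this one.
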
    

\begin{proof}
    \ref{item:f_sym} follows from \cref{def:gen_ferrers}.
    Further, by this definition, the first $i$ coordinates and the last $d-i$ coordinates are independent from each other.
    Hence, \ref{item:f_decomp} holds.
    The next two items are easy to see.
    The last two items follow from \ref{item:f_decomp} and \ref{item:f_small}.
\end{proof}

Equipped with the lemma above, we can show the following.

\begin{restatable}{lemma}{countthree}
    For $d \geq 1$, $P_{d,2} = \frac{1}{2}d^2 + \frac{3}{2}d$ and $P_{d,3} = \frac{1}{6}d^3 + d^2 + \frac{17}{6}d - 2$.
\end{restatable}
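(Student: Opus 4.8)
The plan is to evaluate the formula \eqref{eq:plus_count} directly for $p=2$ and $p=3$ by organizing the sum over sparse compositions in $\D_{d+1,p}$ according to the multiset of nonzero parts. A sparse composition of $p$ is determined by choosing the positions of its nonzero entries—no two of which may be adjacent among the indices $0,1,\dots,d$—together with the values placed there, and each such composition contributes $\prod_{i} F_{d,i}(c_i)$ to $P_{d,p}$. Since $F_{d,i}(0)=F_{d,i}(1)=1$ by \cref{lem:f_obs}, only the positions carrying a part of size $\geq 2$ produce nontrivial factors. Throughout I would use that the number of ways to choose $k$ pairwise non-adjacent positions among the $d+1$ indices $\{0,\dots,d\}$ equals $\binom{d-k+2}{k}$.

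For $P_{d,2}$ there are exactly two types of nonzero-part multisets, namely $\{2\}$ and $\{1,1\}$. For type $\{2\}$ the single "$2$" sits at one index $i$, contributing $F_{d,i}(2)$, and I would sum over $i$ to get $\sum_{i=0}^{d}F_{d,i}(2)=2d$: the two endpoints contribute $1$ and each of the $d-1$ interior indices contributes $2$ by \cref{lem:f_obs}, and this equals $2d$ for every $d\geq 1$. For type $\{1,1\}$ each of the $\binom{d}{2}$ non-adjacent pairs of positions contributes $1$. Hence $P_{d,2}=2d+\binom{d}{2}=\tfrac12 d^2+\tfrac32 d$.

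For $P_{d,3}$ the nonzero-part multisets are $\{3\}$, $\{2,1\}$, and $\{1,1,1\}$. Type $\{3\}$ contributes $\sum_{i=0}^{d}F_{d,i}(3)=5d-3$; here \cref{lem:f_obs} gives the per-index pattern $2,4,5,\dots,5,4,2$, which sums to $5d-3$ for $d\geq 3$, and the remaining cases $d\in\{1,2\}$ are checked by hand to equal $5d-3$ as well. Type $\{2,1\}$ places the "$2$" at $i$ and the "$1$" at a non-adjacent $j$, contributing $F_{d,i}(2)$; summing $F_{d,i}(2)$ against the number of admissible $j$ (which is $d-1$ for each of the two endpoints and $d-2$ for each interior index) gives $2(d-1)+2(d-1)(d-2)=2(d-1)^2$. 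Type $\{1,1,1\}$ contributes $\binom{d-1}{3}$, one for each triple of pairwise non-adjacent positions. Adding the three contributions,
\[
    P_{d,3}=(5d-3)+2(d-1)^2+\binom{d-1}{3},
\]
and expanding the right-hand side yields $\tfrac16 d^3+d^2+\tfrac{17}{6}d-2$.

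The main obstacle is bookkeeping at small $d$, where the generic per-index values of \cref{lem:f_obs} degenerate: for instance the special indices $1$ and $d-1$ coincide when $d=2$, and indeed $F_{2,1}(3)=3$ rather than $4$, so the pattern $2,4,5,\dots,5,4,2$ does not apply verbatim. The safe route is to establish the two index-sum identities $\sum_i F_{d,i}(2)=2d$ and $\sum_i F_{d,i}(3)=5d-3$ uniformly in $d$—using \cref{lem:f_obs} once $d$ is large enough that all special indices are distinct, and direct evaluation for the finitely many small $d$—after which the placement counts combine into the stated quadratic and cubic with no further case analysis. A secondary point to get right is the non-adjacency count $\binom{d-k+2}{k}$ and, for type $\{2,1\}$, the fact that the "$2$" and "$1$" occupy distinguishable roles, so one must sum over ordered (not unordered) admissible placements.
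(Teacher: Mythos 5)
Your proposal is correct and follows essentially the same route as the paper's proof: both evaluate \eqref{eq:plus_count} by grouping the sparse compositions according to the multiset of their nonzero parts, substituting the values of $F_{d,i}$ from \cref{lem:f_obs}, and counting non-adjacent placements, arriving at the identical intermediate expressions $2d+\binom{d}{2}$ and $(5d-3)+2(d-1)^2+\binom{d-1}{3}$. Your explicit attention to the degenerate small-$d$ cases (e.g.\ $F_{2,1}(3)=3$ rather than $4$, so the per-index pattern $2,4,5,\dots,5,4,2$ only applies verbatim for $d\geq 3$) is a point the paper's case analysis glosses over, and verifying the index-sum identities $\sum_i F_{d,i}(2)=2d$ and $\sum_i F_{d,i}(3)=5d-3$ uniformly in $d$ is the right way to close that gap.
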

\begin{proof}
    Let $c = (c_0, \dots, c_d)$ be a sparse composition of 3.
    Define $\alpha(c,d) := \prod^d_{i=0} F_{d,i}(c_i)$.
    We consider two cases.

    \textbf{Case 1.1:} \textit{For some $t$, $c_t = 2$}.
    There are $d+1$ such sparse compositions corresponding to the value of $t = 0, \dots, d$.
    By \cref{lem:f_obs}, $\alpha(c,d) = 1$ if $t = 0$ or $t = d$, and $\alpha(c,d) = 2$ otherwise. 

    \textbf{Case 1.2:} \textit{Exactly two of $c_0, \dots, c_d$ have value 1}.
    By \cref{lem:f_obs}, $\alpha(c,d) = 1$.
    Suppose $c_t$ and $c_{t'}$  with $t < t'$ are two coordinates with value 1.
    Then it is easy to see that $t$ and $t'-1$ are two distinct elements of $[0,d-1]$.
    Hence, there are $d \choose 2$ such sparse compositions.
    
    Applying (\ref{eq:plus_count}), we then have
    \[
        P_{d,2} = 2 \cdot 1 + (d-1) \cdot 2 + {d \choose 2} = \frac{1}{2}d^2 + \frac{3}{2}d.
    \]

    Next, let $c = (c_0, \dots, c_d)$ be a sparse composition of 3.
    We define $\alpha(c,d)$ as before.
    Consider the following cases.

    \textbf{Case 2.1:} \textit{For some $t$, $c_t = 3$}.
    There are $d+1$ such sparse compositions.
    By \cref{lem:f_obs}, $\alpha(c,d) = 2$ if $t=0$ or $t=d$; $\alpha(c,d) = 4$ if $t=1$ or $t=d-1$; and $\alpha(c,d) = 5$ otherwise.

    \textbf{Case 2.2:} \textit{For some $t$, $c_t = 2$}.
    If $t=0$ or $t=d$, there are $d-1$ such sparse compositions, and $\alpha(c,d) = 1$ by \cref{lem:f_obs}.
    For other value of $t$, there are $d-2$ such sparse compositions, and $\alpha(c,d) = 2$.

    \textbf{Case 2.3:} \textit{Exactly three of $c_0, \dots, c_d$ have value 1}.
    In this case, $\alpha(c,d) = 1$.
    Using similar argument as in Case 1.2, the number of such sparse compositions is $d-1 \choose 3$.

    Applying (\ref{eq:plus_count}), we then have
    \[
        P_{d,3} = 2 \cdot 2 + 2 \cdot 4 + (d-3) \cdot 5 + 2(d-1) \cdot 1 + (d-1)(d-2) \cdot 2 + {d-1 \choose 3} = \frac{1}{6}d^3 + d^2 + \frac{17}{6}d - 2.
    \]
\end{proof}

Lastly, recall that an integer partition of $n$ is the number of ways of writing $n$ as the (unordered) sum of positive integers.
Let $\pi(n)$ be the number of integer partitions of $n$.
Further, let $\delta(n)$ be the number of ways of writing $n$ as the (unordered) sum of distinct positive integers.

\begin{restatable}{lemma}{countfour}
    $P_{2,p} = \pi(p) + \sum^{p}_{i=0} \delta(i) \cdot \delta(p-i)$.
\end{restatable}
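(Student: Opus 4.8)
The plan is to apply the master formula~\eqref{eq:plus_count} with $d=2$, so that sparse compositions are $3$-tuples $(c_0,c_1,c_2)$ with the constraint that no two cyclically-adjacent-in-index entries are both positive; concretely, either $c_1=0$, or $c_0=c_2=0$. Using the symmetry $F_{2,0}=F_{2,2}$ and $F_{2,1}$ from \cref{lem:f_obs}, I would split the sum over $\D_{3,p}$ into exactly these two regimes and identify each with a known partition-theoretic count.

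First I would handle the case $c_1=0$. Here the only constraint left is $c_0+c_2=p$ with $c_0,c_2\geq 0$, and the contribution is $\sum_{i=0}^{p}F_{2,0}(i)\,F_{2,0}(p-i)$. The key identification is that $F_{2,0}(i)=\delta(i)$: by \cref{def:gen_ferrers} a $(2,0)$-Ferrers diagram sits in the region $a_1\geq a_2$ (taking $i=0$ forces the decreasing constraint on both coordinates), and the number of such down-closed sets of size $i$ is exactly the number of partitions of $i$ into distinct parts, read off as the strictly decreasing column heights (cf.\ the right panel of \cref{fig:ferrers}). This gives the term $\sum_{i=0}^{p}\delta(i)\,\delta(p-i)$.

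Next I would handle the case $c_0=c_2=0$, $c_1=p$, which contributes the single term $F_{2,1}(p)$. The identification here is $F_{2,1}(p)=\pi(p)$: a $(2,1)$-Ferrers diagram lives in $Z(2,1)=\{a_1\leq a_2\}$ with the down-closure condition, which is precisely an ordinary Ferrers diagram, and these are in bijection with integer partitions of $p$ (as noted after \cref{def:gen_ferrers}, $(2,1)$-Ferrers diagrams are exactly the Ferrers diagrams). One must be careful about double-counting: the tuple $(0,p,0)$ is only a legal sparse composition when $p\geq 1$, while $(c_0,0,c_2)$ with $c_0,c_2>0$ is always legal, so I would check that the $p=0$ boundary is consistent (there $P_{2,0}=1$, matching $\pi(0)+\delta(0)^2=1$). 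Summing the two cases yields $P_{2,p}=\pi(p)+\sum_{i=0}^{p}\delta(i)\,\delta(p-i)$.

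The main obstacle is establishing the two combinatorial identifications $F_{2,0}(p)=\delta(p)$ and $F_{2,1}(p)=\pi(p)$ rigorously rather than by appeal to the picture. For $F_{2,1}$ this is essentially the standard Ferrers-diagram/partition bijection and is safe. For $F_{2,0}$ the subtlety is that the ambient region is $a_1\geq a_2>0$, so a down-closed set of $p$ points is determined by its column heights which must be \emph{strictly} decreasing (as emphasized in the caption of \cref{fig:ferrers}); confirming that strictly-decreasing column heights summing to $p$ biject with partitions into distinct parts is where I would spend the most care, after which the convolution structure of~\eqref{eq:plus_count} delivers the result immediately.
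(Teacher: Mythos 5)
Your proposal is correct and follows essentially the same route as the paper: split the sparse $3$-compositions of $p$ into $(0,p,0)$ and $(c,0,p-c)$, identify $F_{2,1}(p)=\pi(p)$ and $F_{2,0}(p)=F_{2,2}(p)=\delta(p)$ via ordinary Ferrers diagrams and partitions into distinct parts, and plug into \eqref{eq:plus_count}. One small correction: $Z(2,1)$ is all of $\Z^2_{>0}$ (for $i=1$ and $d=2$ both constraint ranges in \cref{def:gen_ferrers} are empty), not the region $a_1\leq a_2$ as you wrote, but this does not affect your conclusion that $F_{2,1}(p)=\pi(p)$.
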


\begin{proof}
	The sparse 3-compositions of $p$ are either $(0,p,0)$ or of the form $(c,0,p-c)$ for some $c \in [0,p]$. 
    We now consider the values $F_{2,0}(p)$, $F_{2,1}(p)$, and $F_{2,2}(p)$.
    We start with the $(2,1)$-Ferrers diagrams.
    In this case, the set $Z(2,1)$ in \cref{def:gen_ferrers} is identical to $\Z^d_{>0}$.
    Hence, a $(2,1)$-Ferrers diagram of $p$ is essentially an integer partition of $p$.
    Therefore, $F_{2,1}(p) = \pi(p)$.
    For $(2,2)$-Ferrers diagrams, a point $(a_1,a_2) \in Z(2,2)$ must satisfy $a_2 \geq a_1$.
    Therefore a $(2,2)$-Ferrers diagram of $p$ is an integer partition of $p$ with distinct parts; see~\cref{fig:ferrers}(right).
    Therefore, $F_{2,2}(p) = \delta(p)$.
    Likewise, we also have $F_{2,0}(p) = \delta(p)$.
    Then the lemma follows from all of the above and (\ref{eq:plus_count}).
\end{proof}

Note that $\pi(n)$ and $\delta(n)$ are counted by the OEIS sequences A000041 and A000009~\cite{oeis}, respectively.
Further, the sum $\sum^{p}_{i=0} \delta(i) \cdot \delta(p-i)$ is counted by the OEIS sequence A022567.
Hence, $P_{2,p}$ is counted by the sum of the two sequences A000041 and A022567.

\section{Concluding remarks}
\label{sec:conclusions}

Firstly, we note that \cref{thm:bijection} is tight in the sense that we cannot relax the condition $p \leq \min\{n, \tn\} - d$.
As an example, consider the case when $d = n-1 = \tn - 2$ and $p = 2$.
In other words, we consider the possibility of a bijection between $\bar{S}_{\leq 2}(n,n-1)$ and $\bar{S}_{\leq 2}(n+1,n-1)$ or equivalently a bijection between $S_{\leq 2}(n,1)$ and $S_{\leq 2}(n+1,2)$.
As shown in \cref{app:tightness}, we have
\begin{align*}
    |S_{\leq 2}(n,1)| &= 1 + n + {n \choose 2},
    &|S_{\leq 2}(n+1,2)| &= 1 + n + {n \choose 2} + n - 1.
\end{align*}
Hence, there cannot be any bijection between $S_{\leq 2}(n,1)$ and $S_{\leq 2}(n+1,2)$.

An intuitive reason is that when $p > n-d$, \cref{ob:no_flat} no longer holds.
That is, there can be a flat series with only $+$-subsets.
For example, when $p = n-d+1$, there exists a co-signotope $\tau \in \bar{S}_{p}(n,d)$ such that its $+$-subsets form exactly the whole $(\tau,S_i,i)$-series for some $i \in [0,d]$.
In other words, its only $+$-component contains both $S_i$ and $S_{i+1}$.
However, for $\tn > n$ (i.e., $p \leq \tn-d)$, \cref{ob:no_flat} holds again, and it is unclear if $\tau$ should be mapped to a co-signotope $\tilde{\tau} \in \bar{S}_{p}(\tn,d)$ with a nonempty $C^{\tilde{\tau}}_i$ or
a nonempty $C^{\tilde{\tau}}_{i+1}$.

Secondly, from \cref{fig:1elementextension}, it seems that the bijection of \cref{thm:bijection} could be proved by the following simple geometric argument:
Suppose $n < \tn$; 
we remove $\tn-n$ hyperplanes that are completely in the negative side of the extending hyperplane of the one-element extension of $\mathbf{X}^{\tn,d}_c$.
However, this argument does not work in a high dimension.
For example, the positive side may consist of these two points: the intersection of the hyperplanes $H_1, \dots, H_d$ and that of $H_{\tn-d+1}, \dots, H_{\tn}$.
For $d$ and $\tn$ such that $d > \tn - d + 1$, there are then no hyperplanes completely on the negative side.

Lastly, it would be interesting to obtain understanding on more values of the plus counts, for example, their exact counts, generating function, or more relations.
As a starter, we conjecture that $P_{d,3} = P_{d-1,3} + P_{d-1,2} + P_{d-1,1} + 3$.

{
	\bibliography{references}
}

\appendix

\section{The counts of $S_{\leq 2}(n,1)$ and $S_{\leq 2}(n+1,2)$}
\label{app:tightness}

The signotopes of $S(n,1)$ are exactly the binary strings of size $n$, and the number of $+$-subsets in such a signotope is the number of ones in the corresponding binary string.
Hence, 
\[
    |S_{\leq 2}(n,1)| = 1 + n + {n \choose 2}.
\]

The signotopes of $S(n+1,2)$ are exactly the permutations of size $n+1$, and the number of $+$-subsets in such a signotope is the number of inversions in the corresponding permutation (i.e., the number of pairs $(i,j)$ such that $1 \leq i < j \leq n+1$, and the $i$-th symbol of the permutation is larger than its $j$-th symbol).
It is easy to see that there is one permutation with no inversion (i.e., the identity permutation $\text{id} = 1\dots (n+1)$) and $n$ permutations with one inversion (by transposing any two consecutive symbols of $\text{id}$).
The permutations with two inversions can be counted as follows.
For $i \in [n]$ and a permutation $\rho$, let $\sigma_i(\rho)$ be the permutation obtained from $\rho$ by transposing the $i$-th and $(i+1)$-th elements.
Then any permutation with two inversions can be written as $\sigma_i(\sigma_j(\text{id}))$ for some distinct $i$ and $j$.
If $|i-j| \geq 2$, then we have $\sigma_i(\sigma_j(\rho)) = \sigma_j(\sigma_i(\rho))$.
Otherwise, $\sigma_i(\sigma_j(\rho)) \neq \sigma_j(\sigma_i(\rho))$.
Hence, 
\[
    |S_{\leq 2}(n,1)| = 1 + n + {n \choose 2} + n-1.
\]
\end{document}